\newcommand{\ekviv}{\Longleftrightarrow}
\newtheorem{observation}{Observation}
\definecolor{ChadDarkBlue}{rgb}{.1,0,.2}  
\definecolor{ChadBlue}{rgb}{.1,.1,.5}  
\definecolor{ChadRoyal}{rgb}{.2,.2,.8}  
\definecolor{ChadGreen}{rgb}{0,.4,0}    
\definecolor{ChadRed}{rgb}{.5,0,.5}  
\def\zruseno#1{} 
\def\logoesf{%
\begin{tabular}{l l}
\begin{tabular}{c}
{Supported by}\\
\phantom{\huge X}
\end{tabular}& \ \resizebox{8.58cm}{!}{\includegraphics{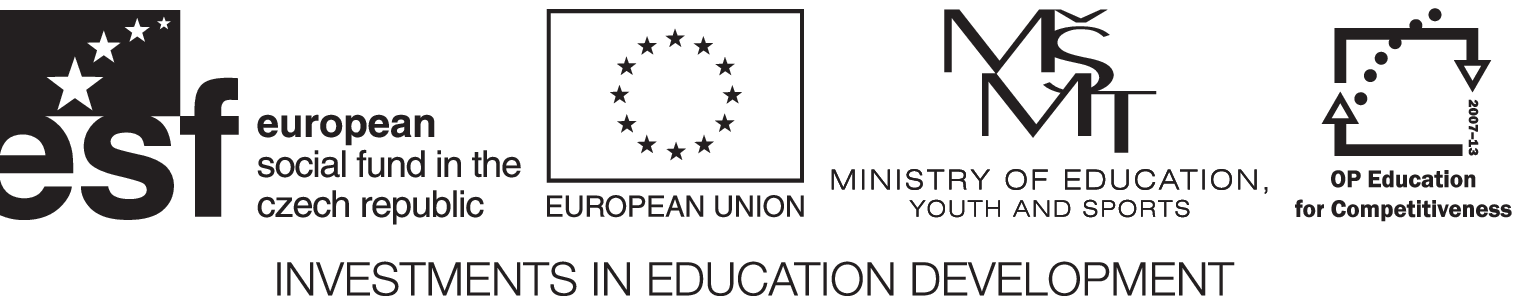}}
\end{tabular}}
\title[Galois connections and tense operators on q-effect algebras]%
{\bf Galois connections and tense operators\\  on q-effect algebras}
\author {Ivan Chajda, Jan Paseka}
\thanks{Both authors gratefully acknowledge  the support by ESF Project CZ.1.07/2.3.00/20.0051
Algebraic methods in Quantum Logic of the Masaryk University. \\
\logoesf\\%
I. Chajda acknowledges the support by a bilateral project I 1923-N25 
New Perspectives on Residuated Posets  financed by  
Austrian Science Fund (FWF) 
and the Czech Science Foundation (GA\v CR). 
J. Paseka gratefully acknowledges Financial Support 
of the Czech Science Foundation (GA\v CR) un\-der the grant 
Algebraic, many-valued and quantum structures for uncertainty modelling 
No.~GA\v CR  15-15286S}
\address{Palack\' y University Olomouc, Faculty of Sciences, t\v r. 17.listopadu 1192/12, Olomouc 771 46, Czech Republic}
\email{ivan.chajda@upol.cz}
\address{Department of Mathematics and Statistics, Faculty of Science,
Masaryk University, {Kotl\'a\v r{}sk\' a\ 2}, 611~37 Brno, 
Czech Republic}
\email{paseka@math.muni.cz}
\keywords{Effect algebra, q-effect algebra, Galois q-connection, q-tense operators,  
q-Jauch-Piron  q-effect algebra, q-representable   q-effect algebra.}
\subjclass{06D35, 06F35, 03G10}
\begin{document}

\begin{abstract}
For effect algebras, the so-called tense operators were already introduced 
by Chajda and Paseka. They presented also a canonical construction of 
them using the notion of a time frame.

Tense operators express the quantifiers
  ``it is always going to be the case that" and ``it has always been the case that" 
and hence enable us to express the dimension of time both 
in the logic of quantum mechanics and in the many-valued logic.

  A crucial problem concerning tense operators is their representation. Having an effect algebra with tense operators, 
we can ask if there exists a time frame such that each 
of these operators can be obtained by the canonical  construction. 
To approximate physical real systems as best as possible, 
we introduce the notion of a q-effect algebra and we solve this problem for q-tense operators on 
q-representable q-Jauch-Piron q-effect algebras.
\end{abstract}

\maketitle

\newtheorem{definition}{Definition}
\newtheorem{proposition}{Proposition}
\newtheorem{theorem}{Theorem}
\newtheorem{lemma}{Lemma}
\newtheorem{claim}{Claim}
\newtheorem{cor}{Corollary}
\newtheorem{corollary}{Corollary}
\newtheorem*{example}{Example}
\newtheorem{remark}{Remark}
\newtheorem{open}{Open problem}

\section{Introduction}

\label{intro}

{E}{ffect algebras were introduced by Foulis and Bennett \cite{FoBe} as} 
an abstraction of the Hilbert space effects
which play an important role in the logic of quantum mechanics.  However, this notion does not incorporate the
dimension of time. In fact, if an effect algebra $E$ is associated with a (possibly quantum-mechanical) physical system 
${\mathcal Q}$ under study, i.e. elements of $E$ represent 
propositions about the system ${\mathcal Q}$, the dimension 
of time can be expressed in terms of a one parameter 
group of continuous affine automorphisms on the state space 
$S$ of $E$. This group is usually called the dynamic group.

This means that effect algebras can serve to describe the states of effects in a given time but they
cannot reveal what these effects expressed in the past or what they will reveal in the next time. 
A similar problem was already solved for the 
classical propositional calculus by introducing the so-called tense operators 
$G$ and $H$ in Boolean algebras, see
\cite{1}. For MV-algebras and for {\L}ukasiewicz-Moisil algebras, tense operators 
were introduced by  Diaconescu and Georgescu in \cite{2}.   Contrary to Boolean algebras where the representation problem {through a time frame} 
is solved completely, authors in \cite{2} only mention that this problem for MV-algebras was not treated. 
Botur and Paseka were able to 
find a suitable time frame for given tense operators on a semisimple MV-algebra (see \cite{boturtense}), i.e., to 
solve the representation problem for semisimple MV-algebras.

Tense operators for effect algebras were 
introduced in \cite{dyn} by Chajda and Paseka and the corresponding tense logic was treated in 
\cite{chajdakolarik,dyn,dyn2} by Chajda, Janda, Kola\v{r}\'{i}k and Paseka. 

The concept of tense operators is closely connected 
with the concept of a time frame. It is a couple $(T,R)$ when $T$ is a time scale (both discrete or continuous) and 
$R$ is a relation of time preference, i.e., for $t_1, t_2\in T$
the expression $t_1Rt_2$ means ``$t_1$ is before $t_2$" and ``$t_2$ is after $t_1$", reflecting 
the fact that $R$ need not be an ordering. 

Having a time frame, we can derive tense operators $G, H, P$ and $F$ by a standard construction 
using suprema and infima. However, these tense operators can be given axiomatically   and this rises a natural question on the existence 
of a time frame for which these operators can be derived as mentioned above. This is called 
a representation problem. It is well known \cite{1} that this problem is easily solvable for Boolean algebras 
but it is not solvable e.g. for MV-algebras or effect algebras in general (see \cite{boturtense} and 
\cite{chajapas} for details). Hence, we are encouraged to find certain restrictions that 
are still in accordance with physical reality. These restrictions will enable us to solve the restricted problem 
for those algebras that are useful for axiomatization of many-valued and/or quantum logics.

The first restriction is to use the so-called Jauch-Piron states which correspond to behaviour of physical 
systems in quantum mechanics. The second one is to consider the so-called q-effect algebras instead 
of effect algebras which are equipped with two unary operations which, however, are term operations 
in the case of MV-algebras or lattice effect algebras. 

It is a rather surprising fact that these quite natural restrictions enable us to develop and use 
a completely different machinery based on using of Galois connections. The deeply developed 
theory of Galois connection can support an interesting construction of the preference 
relation $R$ on a time scale $T$ which is formed by means of Jauch-Piron q-states. 
This forms a time frame for given tense operators provided the set of these states is order reflecting.

It is worth noticing that the operators $G$ and $H$ can be considered as a 
certain kind of modal  operators which were already studied 
for intuitionistic calculus by Wijesekera 
\cite{wijesekera}, Chajda \cite{chajda} and in a  general setting by Ewald \cite{Ewald}, 
Chajda and Paseka \cite{dynmorpos}.

The paper is organized as follows. After introducing 
several necessary algebraic concepts, we introduce Galois q-connections and q-tense operators in a 
q-effect algebra, i.e., in an effect algebra equipped with two new unary operations $q$  and $d$.
 
In Section \ref {semistate} we introduce a powerful notion of a  q-semi-state on 
a q-effect algebra and we also establish a 
relationship among various kinds of q-semi-states on q-effect algebras. 
Following \cite{chajapas} we introduce the notion of a Jauch-Piron q-semi-state. 
The advantage of this approach is that these semi-states reflect an important property of the logic of quantum mechanics, 
namely the so-called Jauch-Piron property (see \cite{ptakpulm} and \cite{riecanova2}) saying that 
if the probability of propositions $A$ and $B$ being true is zero then there 
exists a proposition $C$ covering both $A$ and $B$.  Therefore we may expect applications 
of our method also in the realm of quantum logic.
We show that every Jauch-Piron q-semi-state can be created as an infimum of q-states.

In Section \ref{construction} we present a canonical construction of Galois 
q-connections and q-tense operators.   In Section \ref{repres}
we outline the problem of a representation of q-tense operators and we 
solve it for q-representable q-Jauch-Piron q-effect algebras.
This means that we get a procedure how to construct a corresponding time 
frame to be in accordance with the canonical construction from \cite{dyn}. 
Since any MV-algebra and any 
lattice effect algebra  with q-tense operators  is a q-effect algebra 
our approach covers the results from  \cite{boturtense} and \cite{chajapas}.

\section{{Preliminaries} and basic facts}
\label{Preliminaries}

By an {\em effect algebra} it is meant a structure $\mathcal{E}=(E;+,0,1)$ 
where $0$ and $1$ are distinguished elements of $E$, $0\neq 1$,
and $+$ is a partial binary operation on $E$ satisfying the following axioms for $x,y,z\in E$:
\begin{itemize}
  \item[(E1)] if $x+y$ is defined then $y+x$ is defined and $x+y=y+x$
  \item[(E2)] if $y+z$ is defined and $x+(y+z)$ is defined then $x+y$ and $(x+y)+z$ are defined and 
$(x+y)+z=x+(y+z)$
  \item[(E3)] for each $x\in E$ there exists a unique $x'\in E$ such that $x+x'=1$; $x'$ is called a {\em supplement} of $x$
  \item[(E4)] if $x+1$ is defined then $x=0$.
\end{itemize}
Having an effect algebra $\mathcal{E}=(E;+,0,1)$, we can introduce the {\em induced order} $\leq$ on $E$ 
and the partial operation $-$ as follows
  $$\begin{array}{l}
x\leq y\quad\text{ if for some }z\in E\quad x+z=y, \\
\text{and in this case} \ z=y-x
\end{array}$$
(see e.g. \cite{dvurec} for details). Then $(E;\leq )$ is an ordered set and $0\leq x\leq 1$ for each $x\in E$.

It is worth noticing that $a+b$ exists in an effect algebra $\mathcal{E}$ if and only if $a\leq b'$ (or equivalently, $b\leq a'$).
This condition is usually expressed by the notation $a\bot b$ (we say that $a,b$ are orthogonal).  Dually, we have a partial operation 
$\cdot$ on $E$ such that $a\cdot b$ exists in an effect algebra $\mathcal{E}$ if and only if $a'\leq b$ in which case $a\cdot b=(a'+b')'$. This allows us to equip 
$E$ with a dual effect algebraic operation such that 
$\mathcal{E}^{op}=(E;\cdot,1,0)$ is again an  effect algebra, 
$'^{\mathcal{E}^{op}}='^{\mathcal{E}}='$ and 
$\leq_{\mathcal{E}^{op}}=\leq^{op}$.

Let $\mathcal{E}=(E;+,0,1)$ be an effect algebra and $d, q:E\to E$ be maps such that, for all $x,y, z\in E$,  
\begin{itemize}
\item[(Q1)] $d(x')=q(x)'$,
\item[(Q2)] $d(0)=0=q(0), $
\item[(Q3)] $d$ is order-preserving, 
\item[(Q4)] $x'\leq x$ implies $x\cdot x=d(x)$, 
\item[(Q5)] $z\leq x, z\leq y$ and $y'\leq x$ imply $d(z)\leq x\cdot y$,
\end{itemize}
We then say that $\mathcal{E}=(E;+,q, d, 0,1)$  is a {\em q-effect algebra}.
Note that a dual of  $\mathcal{E}=(E;+,q, d, 0,1)$ is a  q-effect algebra 
$\mathcal{E}^{op}=(E;\cdot, d, q, 1,0)$.

A {\em morphism  of effect algebras} ({\em morphism  of q-effect algebras}) is a map between
them such that it preserves the partial 
operation $+$, (and the unary operations $q$ and $d), $the bottom and the top elements. In particular, 
$':{\mathcal{E}}\to {\mathcal{E}^{op}}$ is a morphism  of effect algebras (morphism  of q-effect algebras).

A  {\em morphism} $f:P_1\to P_2$ {\em of   posets} 
is an order-preserving map. 
Any morphism  of effect algebras is a morphism of 
corresponding bounded  posets. 
A morphism $f:P_1\to P_2$ of bounded  posets 
is {\em order reflecting} if ($f(a)\leq f(b)$ if and only if $a\leq b$)  
for all $a, b\in P_1$. 

\begin{observation} \label{obsik} Let $P_1, P_2$ be 
bounded posets, $T$ a set and 
$h_{t}:P_1\to P_2, t\in T$ morphisms of bounded posets.
The following conditions are equivalent:
\begin{enumerate}
\item[{\rm(i)}] \(((\forall t \in T)\, h_{t}(a)\leq h_{t}(b))\implies a\leq
b\) for any elements \(a,b\in P_1\);
\item[{\rm(ii)}] The map $h:P_1 \to P_2^{T}$ defined by 
$h(a)=(h_t(a))_{t\in T}$ for all $a\in P_1$ is order reflecting.
\end{enumerate}
\end{observation}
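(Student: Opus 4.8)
The plan is to observe that condition (ii) is simply an unpacking of condition (i) through the definition of the coordinatewise order on the product poset \(P_2^{T}\), so the proof reduces to making two translations explicit. First I would fix that order: for tuples \(x=(x_t)_{t\in T}\) and \(y=(y_t)_{t\in T}\) in \(P_2^{T}\) one has \(x\leq y\) if and only if \(x_t\leq y_t\) for every \(t\in T\). Applied to \(x=h(a)\) and \(y=h(b)\), this reads: \(h(a)\leq h(b)\) holds in \(P_2^{T}\) exactly when \(h_{t}(a)\leq h_{t}(b)\) for all \(t\in T\). This single equivalence is the bridge between the two conditions.

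Next I would record that \(h\) is automatically order-preserving, so one half of the ``order reflecting'' requirement in (ii) comes for free: since each \(h_{t}\) is a morphism of bounded posets, \(a\leq b\) gives \(h_{t}(a)\leq h_{t}(b)\) for every \(t\), that is \(h(a)\leq h(b)\). Thus the genuine content of (ii) is precisely the remaining implication \(h(a)\leq h(b)\implies a\leq b\).

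With these two remarks in hand both directions are immediate. For (i)\(\implies\)(ii), assume \(h(a)\leq h(b)\); by the coordinatewise description this means \(h_{t}(a)\leq h_{t}(b)\) for all \(t\in T\), and (i) then yields \(a\leq b\), so \(h\) reflects order. For (ii)\(\implies\)(i), assume \(h_{t}(a)\leq h_{t}(b)\) for all \(t\in T\); the coordinatewise description gives \(h(a)\leq h(b)\), and order reflectivity of \(h\) yields \(a\leq b\).

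There is no real obstacle here: the statement is a definitional reformulation, and the only point requiring any care is to keep straight that ``order reflecting'' in the sense defined above bundles order preservation (which holds for \(h\) by construction, \(P_2^{T}\) being bounded coordinatewise) together with the order-reflecting implication (which is the nontrivial coordinate of the equivalence). Everything else is a direct substitution of the product order.
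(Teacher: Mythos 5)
Your proof is correct, and it coincides with what the paper intends: the Observation is stated there without any proof, being regarded as an immediate definitional unpacking of the componentwise order on $P_2^{T}$ together with the automatic order preservation of $h$, which is precisely the argument you give. The only cosmetic addition one could make is to note that $h$ also preserves the bottom and top elements (since each $h_t$ does), so that $h$ is genuinely a morphism of bounded posets as the paper's definition of ``order reflecting'' presupposes.
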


We then say that $\{h_{t}:P_1\to P_2; t\in T\}$ is an {\em order reflecting set of 
order preserving maps with respect to} $P_2$. Note that we may in this case 
identify $P_1$ with a subposet of $P_2^{T}$ since $h$ is an injective 
morphism of bounded posets.

If moreover $(E;\leq)$ is a lattice (with respect to the induced order), then  $\mathcal{E}$ is called a 
 {\em lattice effect algebra}.  On any lattice effect algebra  $\mathcal{E}$ 
we may introduce total binary operations $\oplus$ and $\odot$ as follows: 
$x\oplus y=x+(y\wedge x')$ and $x\odot y= (x'\oplus y')'$. 
In this case the unary operations 
$q(x)=x \oplus x$ and $d(x)=x \odot x$ satisfy 
the conditions (Q1)-(Q5) and $\mathcal{E}=(E;+,q, d, 0,1)$ is a  q-effect algebra.
Note that 
a lattice effect algebra 
$\mathcal{E}$ is an MV-algebra (see \cite{Mun} for more details on MV-algebras) 
with respect to the operations $\oplus$ and $'$  if and 
only if $x\wedge y=0$ implies $x\leq y'$.  
In particular, any linearly ordered  q-effect algebra is an MV-algebra. Moreover, 
any morphism of MV-algebras is a morphism of  q-effect algebras.

In what follows we will present a genuine example of a q-effect algebra 
which is not a lattice effect algebra.

\begin{example}\label{qeanotlea} \upshape Let 
$E=\{{\mathbf 0}=(0,0), a=(\frac{5}{6},0), b=(\frac{1}{6},\frac{1}{6}), %
c=(0,\frac{5}{6}), a+b=({1},\frac{1}{6}),  2b=(\frac{2}{6},\frac{2}{6}), %
3b=(\frac{3}{6},\frac{3}{6}), 4b=(\frac{4}{6},\frac{4}{6}), %
5b=a+c=(\frac{5}{6},\frac{5}{6}), b+c=(\frac{1}{6},1), 
{\mathbf 1}=6b=a+b+c=({1},{1})\}\subseteq [0,1]^{2}$. Then 
$E$ is an effect algebra as described in \cite[Example 2.3]{pasnied} and depictured in 
Fig. \ref{Fig2v}. 

 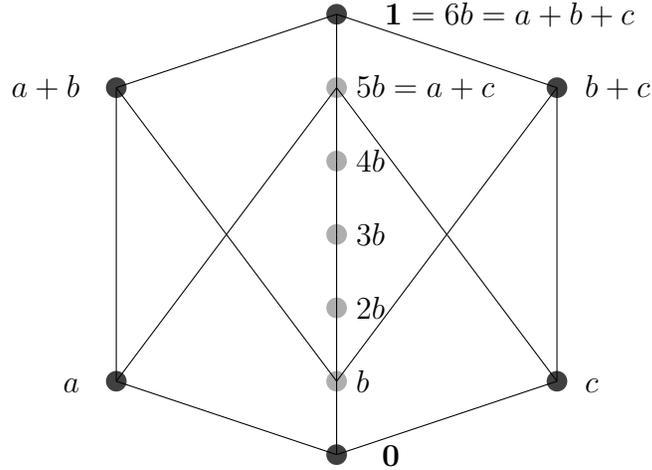
\begin{figure}[h]
\centering
\begin{tikzpicture}[scale=0.976583]
\coordinate [label=right:\phantom{ll}\hbox{$\phantom{ll}{\mathbf 1}=6b=a+b+c$}] (1) at (0,6);
\coordinate [label=right:\phantom{ll}\hbox{$c$}\phantom{lll}] (c) at (3,1);
\coordinate [label=left:\phantom{lll}\hbox{$a$}\phantom{lll}] (a) at (-3,1);
\coordinate [label=left:\phantom{lll}\hbox{$a+b$}\phantom{lll}] (a+b) at (-3,5);
\coordinate [label=right:\phantom{ll}\hbox{$b+c$}\phantom{lll}] (b+c) at (3,5);
\coordinate [label=right:\phantom{l}\hbox{$b$}\phantom{lll}] (b) at (0,1);
\coordinate [label=right:\phantom{l}\hbox{$2b$}\phantom{lll}] (2b) at (0,2);
\coordinate [label=right:\phantom{l}\hbox{$3b$}\phantom{lll}] (3b) at (0,3);
\coordinate [label=right:\phantom{l}\hbox{$4b$}\phantom{lll}] (4b) at (0,4);
\coordinate [label=right:\phantom{l}\hbox{$5b=a+c$}\phantom{lll}] (5b) at (0,5);
\coordinate [label=right:\phantom{llll}\hbox{${\mathbf 0}$}] (0) at (0,0);
\draw (0) -- (a); 
\draw (0) -- (c);
\draw (0) -- (b) ; 
\draw (5b) -- (1);
\draw (a+b) -- (1);
\draw (b+c)  --  (1);
\draw (a)  --  (5b);
\draw (c)  --  (5b);
\draw (a)  --  (a+b);
\draw (c)  --  (b+c);
\draw (b)  --  (2b);
\draw (b)  --  (a+b);
\draw (b)  --  (b+c);
\draw (2b)  --  (3b);
\draw (3b)  --  (4b);
\draw (4b)  --  (5b);
\fill[black,opacity=.75] (0) circle (4pt);
\fill[black,opacity=.75] (1) circle (4pt);
\fill[black,opacity=.75] (a) circle (4pt);
\fill[black,opacity=.32075] (b) circle (4pt);
\fill[black,opacity=.32075] (2b) circle (4pt);
\fill[black,opacity=.32075] (3b) circle (4pt);
\fill[black,opacity=.32075] (4b) circle (4pt);
\fill[black,opacity=.32075] (5b) circle (4pt);
\fill[black,opacity=.75] (c) circle (4pt);
\fill[black,opacity=.75] (b+c) circle (4pt);
\fill[black,opacity=.75] (a+b) circle (4pt);
\end{tikzpicture}
\caption{Figure of underlying poset of a q-effect  algebra which is not lattice ordered}\label{Fig2v}
\end{figure}  

The unary operations $q$ and $d$ on $E$ will be defined as follows:
$$
\begin{array}{|c|c|c|c|c|c|c|c|c|c|c|c|}
\hline
\rule{0mm}{0.45cm}x&{\mathbf 0}&a&b&c&a+b&2b&3b&4b&5b&b+c&{\mathbf 1}\\ \hline
\rule{0mm}{0.45cm}q(x)&{\mathbf 0}&a&2b&c&a+b&4b&{\mathbf 1}&{\mathbf 1}&{\mathbf 1}&b+c&{\mathbf 1}\\ \hline
\rule{0mm}{0.45cm}d(x)&{\mathbf 0}&a&{\mathbf 0}&c&a+b&{\mathbf 0}&{\mathbf 0}&2b&4b&b+c&{\mathbf 1}\\ \hline
\end{array}\phantom{i}.
$$
Clearly, the conditions (Q1)-(Q5) are satisfied. Therefore, 
$(E;+,q, d, {\mathbf 0},{\mathbf 1})$ is a q-effect algebra. Note that 
the join of $a$ and $b$ does not exist in $E$, i.e., 
$(E;+,{\mathbf 0},{\mathbf 1})$ is not lattice ordered.
\end{example}

A {\em standard q-effect algebra $\mathcal I$} is a structure 
$\mathcal I=([0,1];+,q, d, 0,1)$ where $[0,1]$ is the real unit interval, 
$x + y$ is defined iff the usual sum of $x$ and $y$ is less or equal to  $1$ in which case 
$x+y$ coincides with it, $q(x)=x+\min(1-x, x)=x\oplus x$, $d(x)=x-\min(1-x, x)=x\odot x$. 
The corresponding {\em standard MV-algebra} will be simply denoted as $[0,1]$.

A map $s:E\to[0,1]$ is called a
{\em state} (a {\em q-state}) on $\mathcal{E}$ if $s(0)=0$, $s(1)=1$, 
($s(q(x))=s(x)+\min(s(x'), s(x))=s(x)\oplus s(x)$, 
$s(d(x))=1-s(x')-\min(s(x'), s(x))=s(x)\odot s(x)$) and 
$s(x+ y)=s(x)+s(y)$ whenever $x+ y$ exists in $\mathcal{E}$. 
In particular, a q-state $s$ is morphism of  q-effect algebras $s:\mathcal{E}\to \mathcal{I}$.


\subsection*{Riesz decomposition property, filters and ideals in effect algebras} 

We recall that an effect algebra  $\mathcal{E}$ satisfies the {\em Riesz
Decomposition Property} ((RDP) in abbreviation) if  $x \le y_1 +
y_2$ implies that there exist two elements $x_1, x_2 \in E$ with
$x_1 \le y_1 $ and $x_2 \le y_2$ such that $x = x_1 + x_2$.

An {\em ideal} of an effect algebra $\mathcal{E}$ is a non-empty subset $I$ of
$E$ such that 
\begin{enumerate}
\item[(i)] $x \in E$, $y \in I$, $x\le y$ imply $x \in I$,
\item[(ii)] if $x,y \in I$ and $x+y$ is defined in $\mathcal{E}$, then $x+y \in
I$.  
\end{enumerate}

A {\em filter} of an effect algebra $\mathcal{E}$ is an ideal in the dual effect algebra $\mathcal{E}^{op}=(E;\cdot,1,0)$.

We denote by ${Id}(\mathcal{E})$ the set of all ideals of $\mathcal{E}$. 
An ideal $I$ is said to be a {\em Riesz ideal} if, for $x \in I$,
$a,b \in E$ and $x \le a+b$, there exist $a_1,b_1 \in I$ such that
$x = a_1 +b_1$ and $a_1 \le a$ and $b_1 \le b$.

For example, if $\mathcal{E}$ has   (RDP), then any ideal $I$  
of $\mathcal{E}$ is Riesz (see \cite{Dvu4}) and, moreover, 
we obtain a congruence   $\sim_I$ on $\mathcal{E}$ 
which is given by $a\sim_I b$ iff there are $x,y \in I$ with $x\leq a$ and $y\leq b$ such 
that $a-x = b-y$.  It follows that  the quotient  $\mathcal{E}/I$ is an effect algebra with RDP.  
Recall that  if $\mathcal{E}$ carries a structure of an MV-algebra then the notions of an ideal and an MV-algebra ideal coincide. 
In this case, the factor MV-morphism is also a morphism of q-effect algebras.

\subsection*{Dyadic numbers and MV-terms}
\label{Dyadic}

The content of this part summarizes the basic results about certain MV-terms 
from \cite{teheux} in the setting of q-effect algebras.

The set $\mathbb D$ of dyadic 
numbers is the set of the rational numbers that can be written as a finite sum of powers of 2.
We denote 
by $T_{\mathbb D}$  the clone generated by $q$ and $d$.
Let us define, for any $m\in \mathbb{N}$ a term $\mu_{m}\in T_{\mathbb D}$. 
First, if $m=1$ we put  $\mu_{1}=d$. Second, assume that 
$\mu_{m}$ is defined. Then we put $\mu_{m+1}=\mu_1\circ \mu_{m}$.

We will need the following:

\begin{corollary}{\rm \cite[Corollary 1.15 (1)]{teheux}} \label{odhad} Let $\mathcal I$ be the 
standard q-effect algebra, $x\in [0, 1]$ and $r\in (0,1)\cap {\mathbb D}$. Then 
 there is a term $t_r$ in $T_{\mathbb D}$ such that
$$
t_r(x)=1 \quad \text{if and only if}\quad r\leq x.
$$
\end{corollary}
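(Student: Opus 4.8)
The plan is to make the construction explicit through the finite binary expansion of $r$, exploiting the fact that on the standard q-effect algebra the two generators act as the truncated doubling maps $q(x)=\min(2x,1)$ and $d(x)=\max(2x-1,0)$. First I would record these two formulas: they follow immediately from $q(x)=x+\min(1-x,x)$ and $d(x)=x-\min(1-x,x)$ by splitting into the cases $x\le\frac12$ and $x\ge\frac12$. In particular $q(x)=1$ exactly when $x\ge\frac12$, so $q$ already serves as $t_{1/2}$, which will be the base of a recursion.

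Next I would set up that recursion. Since $r\in(0,1)\cap\mathbb D$ is a dyadic rational in the open unit interval, it has a finite binary form $r=0.b_1b_2\cdots b_n$ with $b_i\in\{0,1\}$ and $b_n=1$. Putting $g_0=q$ and $g_1=d$, I define $t_r=q\circ g_{b_{n-1}}\circ\cdots\circ g_{b_1}$ (read as $t_r=q$ when $n=1$). This is a composite of the generators $q,d$, hence a unary term in $T_{\mathbb D}$, so the only thing left to verify is the equivalence $t_r(x)=1\iff r\le x$.

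The heart of the argument is two one-step reduction lemmas, each proved by the case split $x<\frac12$ versus $x\ge\frac12$:
\begin{itemize}
\item if $r<\frac12$ (i.e. $b_1=0$) then $r\le x\iff 2r\le q(x)$;
\item if $\frac12<r<1$ (i.e. $b_1=1$, $n\ge2$) then $r\le x\iff 2r-1\le d(x)$.
\end{itemize}
In the first case, for $x\ge\frac12$ both sides hold ($q(x)=1\ge 2r$ and $x\ge\frac12>r$), while for $x<\frac12$ one has $q(x)=2x$ and the claim is $2r\le2x\iff r\le x$. In the second, for $x\ge\frac12$ one has $d(x)=2x-1$ and the claim reduces to $2r-1\le2x-1\iff r\le x$, while for $x<\frac12$ both sides fail ($d(x)=0<2r-1$ and $x<\frac12<r$). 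The peeled number $2r$ (resp. $2r-1$) again lies in $(0,1)\cap\mathbb D$, with binary expansion $0.b_2\cdots b_n$ of length $n-1$, so these lemmas drive an induction on $n$: the base case $n=1$ is $t_{1/2}=q$, and in the inductive step $t_r=t_{r'}\circ g_{b_1}$ with $r'=0.b_2\cdots b_n$ yields $t_r(x)=t_{r'}(g_{b_1}(x))=1\iff r'\le g_{b_1}(x)\iff r\le x$.

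I expect the only delicate point to be the boundary behaviour at $x=\frac12$, where $q$ saturates to $1$ and $d$ drops to $0$; one must check that the stated equivalences stay two-sided precisely there, which is why each reduction lemma is verified on both halves of the interval rather than on the generic half alone. Everything else is bookkeeping: confirming that the expansion terminates with $b_n=1$ so the recursion always lands on the base value $\frac12$, and that the reduced dyadic never leaves the open interval.
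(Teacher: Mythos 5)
Your proof is correct: the formulas $q(x)=\min(2x,1)$ and $d(x)=\max(2x-1,0)$ on the standard q-effect algebra are right, both one-step reduction lemmas check out on each half of the interval (including the boundary $x=\tfrac12$, where the two case formulas agree), and the induction on the length of the binary expansion goes through because the peeled dyadic $2r$ (resp.\ $2r-1$) stays in $(0,1)\cap\mathbb{D}$ thanks to $b_n=1$. Note that the paper itself gives no proof of this statement --- it is imported verbatim from Teheux's thesis \cite{teheux} --- and your construction (recursion on the binary expansion of $r$ via the two truncated doubling maps, with $q$ as the base term for $r=\tfrac12$) is essentially the argument found in that cited source, so you have supplied a correct self-contained proof of a result the paper only cites.
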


\begin{proposition}{\rm \cite[Proposition 1]{boturtense}}\label{T4} Let $\mathcal{E}$ be a linearly ordered 
q-effect algebra, $s:E\to [0,1]$ a q-state on $\mathcal{E}$, $x\in E$.  Then 
$s(x)=1$ iff $t_r(x)=1$ for all $r\in (0,1)\cap {\mathbb D}$. 

Equivalently, $s(x)<1$ iff there is a dyadic number $r\in (0,1)\cap {\mathbb D}$ 
such that $t_r(x)\not=1$. In this case, $s(x)<r$. 
\end{proposition}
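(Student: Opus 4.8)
The plan is to lean on two structural facts. First, since the q-state $s$ is by definition a morphism of q-effect algebras $s\colon\mathcal E\to\mathcal I$, it preserves $q$ and $d$, so by induction on the term it commutes with every member of $T_{\mathbb D}$; in particular $s(t_r(x))=t_r(s(x))$ for each dyadic $r$. Second, being linearly ordered, $\mathcal E$ is an MV-algebra, and $s$ is an order-preserving map with $s(z)+s(z')=1$ for all $z$ (because $z+z'=1$ and $s$ is additive). I would keep both facts at hand throughout.

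I would first dispatch the easy half and the refinement. Suppose $t_r(x)=1$ for all $r\in(0,1)\cap\mathbb D$. Applying $s$ and using Corollary \ref{odhad} in $\mathcal I$, the equality $t_r(s(x))=s(t_r(x))=s(1)=1$ forces $r\le s(x)$ for every dyadic $r<1$; by density of $\mathbb D$ this gives $s(x)=1$. The same computation, read backwards, yields the reformulation: if $s(x)<1$, pick (by density) a dyadic $r$ with $s(x)<r<1$; then $r\not\le s(x)$, so $t_r(s(x))\neq1$ by Corollary \ref{odhad}, and since $s(1)=1$ we must have $t_r(x)\neq1$. This single choice of $r$ simultaneously supplies a witness and shows $s(x)<r$ for it.

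The genuinely harder implication is $s(x)=1\Rightarrow t_r(x)=1$ for all dyadic $r$, and this is where I expect the main obstacle. The point is that $s$ need not be faithful at the top, i.e. $s(y)=1$ does not force $y=1$ (think of an element infinitesimally below $1$), so one cannot simply pull $s(t_r(x))=1$ back to $t_r(x)=1$. To get around this I would prove a chain lemma: in a linearly ordered MV-algebra with q-state $s$, $s(z)=1$ implies $q(z)=1$. Indeed, from $s(z)+s(z')=1$ we get $s(z')=0$; were $z<z'$, order-preservation of $s$ would give $1=s(z)\le s(z')=0$, a contradiction, so $z\ge z'$ by linearity, and in any MV-algebra $z\ge z'$ is equivalent to $z\oplus z=1$, that is $q(z)=1$. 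Here linearity is essential.

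I would then combine this lemma with the explicit shape of the terms behind Corollary \ref{odhad}: for a dyadic $r=0.b_1\cdots b_k$ with $b_k=1$, the recursion testing $x\ge r$ unwinds to $t_r=q\circ g_{b_{k-1}}\circ\cdots\circ g_{b_1}$ with $g_1=d$ and $g_0=q$, so the outermost operation is always $q$. Since each of $q,d$ preserves the state value $1$ (both $s(q(y))=s(y)\oplus s(y)$ and $s(d(y))=s(y)\odot s(y)$ equal $1$ when $s(y)=1$), from $s(x)=1$ we obtain $s(w_r(x))=1$ for the inner word $w_r$, and the lemma applied to $z=w_r(x)$ gives $t_r(x)=q(w_r(x))=1$. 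Assembling this with the easy half yields the equivalence and its reformulation. The decisive ingredients are thus the chain lemma and the fact that every $t_r$ has $q$ as its outermost symbol; without the latter the argument would break, since $d$ does not preserve being the top element even when the state value is $1$.
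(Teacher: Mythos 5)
Your proposal is correct, but there is nothing in the paper to compare it against: the paper imports this proposition verbatim from \cite[Proposition 1]{boturtense} and gives no proof, so your argument is a reconstruction of the cited source rather than of anything in this text. The reconstruction is sound. The easy direction and the reformulation are exactly as you say: a q-state commutes with every term of $T_{\mathbb D}$ (induction on the term, using $s(q(x))=s(x)\oplus s(x)$ and $s(d(x))=s(x)\odot s(x)$), so Corollary \ref{odhad} applied to $s(x)\in[0,1]$ together with density of the dyadics settles both, including the bonus estimate $s(x)<r$. For the hard direction your two ingredients do close the argument: the chain lemma (from $s(z)+s(z')=1$ and monotonicity, linearity forces $z'\leq z$, whence $q(z)=1$), and the normal form $t_r=q\circ w_r$ with $w_r$ a word in $q,d$, so that $s(x)=1$ propagates to $s(w_r(x))=1$ and the lemma applied to $w_r(x)$ yields $t_r(x)=1$. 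Your recursion $t_{0.b_1\cdots b_k}=t_{0.b_2\cdots b_k}\circ g_{b_1}$ with base $t_{0.1}=q$ is indeed the construction behind Teheux's corollary, so the shape claim is legitimate, though strictly speaking it is information taken from the cited thesis, not from the statement of Corollary \ref{odhad} alone.

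Two refinements. First, in the chain lemma you silently identify the abstract operation $q$ with the MV-term $z\oplus z$; this is true in the linearly ordered case but deserves a line from the axioms, e.g.: if $z'\leq z$, then (Q5) applied with the triple $(z',z,z')$ gives $d(z')\leq z\cdot z'=(z+z')'=0$, so $q(z)=d(z')'=1$ by (Q1). Second, your closing remark is overstated and its justification slightly off: $d$ does preserve the top element ($d(1)=1$); what fails is that $d$ need not send elements of state $1$ to $1$. Moreover the argument does not actually require the outermost symbol to be $q$: any term satisfying Corollary \ref{odhad} for some $r<1$ must contain at least one occurrence of $q$ (pure powers of $d$ satisfy $d^m(x)=1$ only at $x=1$ on $[0,1]$), and splitting at the innermost occurrence, $t_r=w_2\circ q\circ w_1$ with $w_1$ a power of $d$, the same reasoning gives $s(w_1(x))=1$, then $q(w_1(x))=1$ by the chain lemma, then $t_r(x)=w_2(1)=1$. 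So your proof is in fact robust to the choice of the terms $t_r$, not tied to the particular outermost-$q$ normal form.
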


\begin{corollary}\label{obind}  Let $\mathcal{E}=(E;+,q, d, 0,1)$  be a q-effect algebra, 
$k\in \mathbb{N}, h_1, \dots, h_{2^{k}}, h\in E$ such that 
$h\leq h_j, 1\leq j\leq {2^{k}}$ and $ h_1{}\cdot{}  \dots  {}\cdot{} h_{2^{k}}$ exists. Then 
$\mu_{k}(h)\leq   h_1{}\cdot{}  \dots  {}\cdot{} h_{2^{k}}$.
\end{corollary}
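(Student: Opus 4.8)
The plan is to proceed by induction on $k$, using the axiom (Q5) as the single driving tool and exploiting that $2^{k+1}$ factors split naturally into two blocks of $2^{k}$. Recall first that $\mu_{k}=d\circ\cdots\circ d$ ($k$ times), so that $\mu_{k+1}(h)=d(\mu_{k}(h))$, and recall that in $\mathcal{E}$ the product $a\cdot b$ is defined precisely when $a'\leq b$, equivalently $b'\leq a$.

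For the base case $k=1$ I would apply (Q5) directly with $z=h$, $x=h_{1}$, $y=h_{2}$. The hypotheses $h\leq h_{1}$ and $h\leq h_{2}$ are given, and the existence of $h_{1}\cdot h_{2}$ is exactly the orthogonality condition $h_{2}'\leq h_{1}$ demanded by (Q5); hence (Q5) yields $\mu_{1}(h)=d(h)\leq h_{1}\cdot h_{2}$.

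For the inductive step, assume the statement for $k$ and suppose $h\leq h_{j}$ for $1\leq j\leq 2^{k+1}$ with $h_{1}\cdot\cdots\cdot h_{2^{k+1}}$ defined. I would set $u=h_{1}\cdot\cdots\cdot h_{2^{k}}$ and $v=h_{2^{k}+1}\cdot\cdots\cdot h_{2^{k+1}}$; since $\cdot$ is the commutative, associative operation of the effect algebra $\mathcal{E}^{op}$, the existence of the full product guarantees via (E1)--(E2) read in $\mathcal{E}^{op}$ that both block products $u$ and $v$ exist and that $u\cdot v=h_{1}\cdot\cdots\cdot h_{2^{k+1}}$. Applying the induction hypothesis to each block separately (note $h\leq h_{j}$ on each block) gives $\mu_{k}(h)\leq u$ and $\mu_{k}(h)\leq v$. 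Now (Q5) with $z=\mu_{k}(h)$, $x=u$, $y=v$ applies: the two order relations are precisely the conclusions just obtained, and the orthogonality premise $v'\leq u$ is equivalent to the already-known existence of $u\cdot v$. Thus $\mu_{k+1}(h)=d(\mu_{k}(h))\leq u\cdot v=h_{1}\cdot\cdots\cdot h_{2^{k+1}}$, which closes the induction.

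The only genuinely delicate point is the bookkeeping with orthogonality: one must verify that the existence of the full $2^{k+1}$-fold product descends to the existence of the two half-products $u$ and $v$, and that the condition $v'\leq u$ it leaves behind is exactly the premise (Q5) requires. This descent is the standard fact that any subfamily of a summable family is summable, now applied in $\mathcal{E}^{op}$, together with the associativity used to regroup the iterated product as $u\cdot v$; it is the one place the argument could fail if the bracketing of the iterated product were handled loosely. Everything else is a mechanical match between the binary splitting of the $2^{k}$ factors and the single application of $d$ that passes from $\mu_{k}$ to $\mu_{k+1}$.
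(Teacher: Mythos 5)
Your proof is correct and is exactly the argument the paper has in mind: the paper's own proof of Corollary~\ref{obind} consists of the single sentence ``It follows by an obvious induction with respect to $k$,'' and your write-up fills in precisely that induction (base case via (Q5) with $z=h$, $x=h_1$, $y=h_2$; inductive step by splitting into two blocks of $2^k$ factors, applying the hypothesis to each, and invoking (Q5) once more, with the block products existing by associativity of $\cdot$ as the sum of $\mathcal{E}^{op}$). Nothing further is needed.
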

\begin{proof}
It follows by an obvious induction with respect to $k$. 
\end{proof}

\subsection*{Galois connections}

Let $(A;\leq)$ and $(B;\leq)$ be ordered sets. A mapping $f:A\to B$ is 
called {\em residuated} if there exists a mappping $g:B\to A$ 
such that 
$$f(a)\leq b\quad\text{if and only if}\quad a\leq g(b)$$
for all $a\in A$ and $b\in B$. 

In this situation, we say that $f$  and $g$ form a {\em residuated pair} or that 
the pair $(f,g)$ is a (monotone) {\em Galois connection}. 
The mapping $f$ is called a {\em lower adjoint of $g$} or a
{\em left adjoint of $g$}, the mapping $g$ is called an 
{\em upper adjoint of $f$} or a 
{\em right adjoint of $f$}. 

Galois connections can be described as follows.

\begin{lemma}\label{GalCon}
Let $(A;\leq)$ and $(B;\leq)$ be ordered sets. Let  $f:A\to B$  and $g:B\to A$ be mappings. The following conditions are equivalent:
\begin{enumerate}
\item $(f,g)$ is a {Galois connection}.
\item $f$ and $g$ are monotone, $\mathop{id}_A\leq g\circ f$ and $ f\circ g\leq \mathop{id}_B$. 
\item $g(b)=\sup\{x\in A \mid f(x)\leq b\}$ and $f(a)=\inf\{y\in B\mid a\leq g(y)\}$ for all $a\in A$ and $b\in B$.
\end{enumerate}
\end{lemma}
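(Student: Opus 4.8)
The plan is to prove the three-way equivalence by establishing the cycle $(1)\implies(2)\implies(3)\implies(1)$, which is the standard route for characterizing Galois connections. Throughout I would work directly from the defining biconditional $f(a)\leq b \iff a\leq g(b)$, since this is the cleanest hypothesis to manipulate.

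For $(1)\implies(2)$, I would first derive the two unit inequalities. To get $\mathop{id}_A\leq g\circ f$, I would instantiate the defining biconditional with $b=f(a)$: the right-hand inequality $f(a)\leq f(a)$ holds trivially, so the left-hand side gives $a\leq g(f(a))$ for all $a\in A$. Symmetrically, setting $a=g(b)$ and using $g(b)\leq g(b)$ yields $f(g(b))\leq b$, i.e.\ $f\circ g\leq \mathop{id}_B$. Monotonicity of $g$ then follows: if $b_1\leq b_2$, then from $f(g(b_1))\leq b_1\leq b_2$ the biconditional gives $g(b_1)\leq g(b_2)$. Monotonicity of $f$ is dual, using $a_1\leq a_2\leq g(f(a_2))$ to conclude $f(a_1)\leq f(a_2)$.

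For $(2)\implies(3)$, I would show that $g(b)$ is the supremum of $S=\{x\in A\mid f(x)\leq b\}$. That $g(b)$ is an upper bound of $S$ follows because $x\in S$ means $f(x)\leq b$, and applying monotone $g$ together with $\mathop{id}_A\leq g\circ f$ gives $x\leq g(f(x))\leq g(b)$. That $g(b)$ is itself a member of $S$ (hence the largest such element, hence the supremum) follows from $f(g(b))\leq b$, which is exactly the condition $g(b)\in S$. The formula for $f(a)$ as an infimum is proved dually. Finally, for $(3)\implies(1)$, I would verify the biconditional from the supremum/infimum descriptions: if $f(a)\leq b$ then $a$ belongs to $\{x\mid f(x)\leq b\}$, so $a\leq \sup\{x\mid f(x)\leq b\}=g(b)$; conversely if $a\leq g(b)$ then since $f$ is monotone (which one extracts from the extremal descriptions, or one argues directly) $f(a)\leq f(g(b))\leq b$, using that $g(b)$ lies in the defining set.

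I expect the only genuinely delicate point to be the final implication $(3)\implies(1)$, where the extremal formulas do not hand us monotonicity of $f$ and $g$ for free; one must be careful to extract the inequality $f(g(b))\leq b$ from the description of $f(a)=\inf\{y\mid a\leq g(y)\}$ applied at $a=g(b)$, rather than assuming properties of $(2)$. Everything else is a routine manipulation of the adjunction biconditional and the universal properties of suprema and infima, so the proof is short once the cycle is organized correctly.
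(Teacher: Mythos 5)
Your proof is correct. Note that the paper states Lemma~\ref{GalCon} without any proof at all, treating it as a classical fact about Galois connections, so there is no argument of the paper to compare against; your cyclic scheme $(1)\Rightarrow(2)\Rightarrow(3)\Rightarrow(1)$ is the standard textbook route, and each step checks out. One remark: the point you flag as genuinely delicate in $(3)\Rightarrow(1)$ is in fact immediate if argued directly. If $a\leq g(b)$, then $b$ belongs to the set $\{y\in B\mid a\leq g(y)\}$, and since hypothesis $(3)$ says $f(a)$ is the infimum of that set, $f(a)\leq b$ follows at once—no appeal to monotonicity of $f$, nor to the inequality $f(g(b))\leq b$, is needed. (Your longer route also works: monotonicity of $f$ does follow from the infimum description, because $a_1\leq a_2$ makes $\{y\mid a_2\leq g(y)\}$ a subset of $\{y\mid a_1\leq g(y)\}$ and infima reverse inclusion; but this is extra machinery the direct argument avoids.) With that simplification the last implication is exactly as routine as the other two.
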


In the above case, $g$ is determined uniquely by $f$ and, similarly, $f$ is determined uniquely by $g$.  
Moreover, $f$ preserves all existing joins in $(A;\leq)$ and 
 $g$ preserves all existing meets in $(B;\leq)$. If, in addition, both $(A;\leq)$ and $(B;\leq)$ are complete 
ordered sets we have the converse, i.e. if $f$ preserves all  joins in $(A;\leq)$ then 
$f$ has an upper adjoint $g$ given by the condition $g(b)=\sup\{x\in A \mid f(x)\leq b\}$, 
for all $b\in B$. Similarly,  if $g$ preserves all  meets in $(A;\leq)$ then 
$g$ has a lower adjoint $f$ given by the condition $f(a)=\inf\{y\in B\mid a\leq g(y)\}$,  
for all $a\in A$.

We now present a most prominent example of monotone Galois connection related to temporal logic.

\begin{example} {\em Let $A$ and $B$ are arbitrary sets and let $R\subseteq A\times B$. 
For every $X\subseteq A$ and every $Y\subseteq B$, we define 
$$f_R(X)=\{b\in B \mid \exists x\in X \ \text{such that}\ x R b\}$$
and 
$$g_R(Y)=\{a\in A \mid \left(\forall y\in B\right) \left( a R y\ \text{implies}\ y\in Y\right)\}.$$
Then 
$f_R(X)\subseteq Y$ $\ekviv$ $\forall x\in A, \forall y\in B \left(x\in X\ \text{and}\ xRy\ \text{implies}\ y\in Y\right)$ 
 $\ekviv$  $X\subseteq g_R(Y)$. Hence the pair $(f_R, g_R)$ forms a Galois connection 
between power sets $(\wp(A);\subseteq)$ and $(\wp(B);\subseteq)$ and any Galois connection 
between $(\wp(A);\subseteq)$ and $(\wp(B);\subseteq)$ is of this form.

}
\end{example}

\subsection*{Galois connections and tense operators on q-effect algebras}\hfill

\medskip


Let $\mathcal{E}_1=(E_1;+_1,q_1, d_1, 0_1,1_1)$ and 
$\mathcal{E}_2=(E_2;+_2,q_2, d_2,0_2,1_2)$  be  q-effect algebras, 
$f:E_1\to E_2$ and $g:E_2\to E_1$ be a mappings such that 
$(f,g)$ is a Galois connection. 
We say that $(f,g)$ is a {\em Galois q-connection} if 

 \begin{itemize}
    \item[(GQ1)] \begin{tabular}{@{}l}$f(q_1(x))=q_2(f(x))$,\\ 
    $f(d_1(x))=d_2(f(x))$,
    \end{tabular}
\item[] 
    \item[(GQ2)] \begin{tabular}{@{}l}$g(q_2(y))=q_1(g(y))$,\\ 
     $g(d_2(y))=d_1(g(y))$
    \end{tabular} 
 \end{itemize}
for all $x\in E_1$ and $y\in E_2$.

Note that then the mappings $\overline{g}={\,} '^{{}_{1}}\circ{\,} g\circ{\,} '^{{}_{2}}:E_2\to E_1$ 
and $\overline{f}={\,} '^{{}_{2}}\circ{\,} f\circ{\,} '^{{}_{1}}:E_1\to E_2$ form 
 a {Galois q-connection} $(\overline{g}, \overline{f})$.

We then have the following. 

\begin{lemma}\label{RgRf}
Let $\mathcal{E}_1=(E_1;+_1,q_1, d_1, 0_1,1_1)$, 
$\mathcal{E}_2=(E_2;+_2,q_2, d_2,0_2,1_2)$ and 
$\mathcal{E}_3=(E_3;+_3,$ $q_3, d_3,0_3,1_3)$  be  q-effect algebras, 
$f:E_1\to E_2$ and $g:E_2\to E_1$ be mappings such that 
$(f,g)$ is a Galois q-connection, $s:E_1\to E_3$ and $t:E_2\to E_3$ order-preserving mappings. 
Then 
$$
(s\circ g)(x)\leq t(x)\ \text{for all}\ x\in E_2\ \text{if and only if }\ 
s(y)\leq (t\circ f)(y)\ \text{for all}\ y\in E_1.
$$
If, moreover,  $s$ and $t$ are morphisms of effect algebras we have 
that 
$$
(s\circ g)(x)\leq t(x)\ \text{for all}\ x\in E_2\ \text{if and only if }\ 
t(z)\leq (s\circ \overline{g})(z)\ \text{for all}\ z\in E_2
$$
and 
$$
s(y)\leq (t\circ f)(y)\ \text{for all}\ y\in E_1 \ \text{if and only if }\  
 (t\circ \overline{f})(w)\leq s(w)\ \text{for all}\ w\in E_1.
$$
\end{lemma}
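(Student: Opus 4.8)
The plan is to treat the three equivalences in turn: the first by the standard adjoint-transposition argument, and the remaining two by a purely formal dualization via complements.

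For the first equivalence I would use only that $f,g$ are monotone together with the unit and counit inequalities $\id_{E_1}\leq g\circ f$ and $f\circ g\leq \id_{E_2}$ supplied by Lemma \ref{GalCon}, plus the hypothesis that $s$ and $t$ are order-preserving. Assume $(s\circ g)(x)\leq t(x)$ for all $x\in E_2$ and fix $y\in E_1$; instantiating the hypothesis at $x=f(y)$ gives $s(g(f(y)))\leq t(f(y))$, and since $y\leq g(f(y))$ and $s$ is monotone we obtain $s(y)\leq s(g(f(y)))\leq (t\circ f)(y)$. Conversely, assume $s(y)\leq (t\circ f)(y)$ for all $y\in E_1$ and fix $x\in E_2$; instantiating at $y=g(x)$ gives $s(g(x))\leq t(f(g(x)))$, and since $f(g(x))\leq x$ and $t$ is monotone we obtain $(s\circ g)(x)=s(g(x))\leq t(f(g(x)))\leq t(x)$. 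This settles the first equivalence.

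For the second and third equivalences the extra hypothesis that $s$ and $t$ are morphisms of effect algebras enters only through the observation that such morphisms preserve complements: from $s(x)+_3 s(x')=s(x+_1 x')=s(1_1)=1_3$ and the uniqueness of supplements (E3) one gets $s(x')=s(x)'$, and likewise $t(y')=t(y)'$. I would also record the definitional identities $g(z')=(\overline{g}(z))'$ and $f(w')=(\overline{f}(w))'$, which follow at once from the definitions of $\overline{g}$ and $\overline{f}$ given above together with involutivity $(x')'=x$. Each remaining equivalence is then obtained by reindexing along the (order-reversing) complement bijection of $E_2$, respectively $E_1$. Concretely, $(s\circ g)(x)\leq t(x)$ for all $x\in E_2$ is equivalent, after the substitution $x=z'$, to $s(g(z'))\leq t(z')$ for all $z\in E_2$; rewriting $t(z')=t(z)'$ and $s(g(z'))=s((\overline{g}(z))')=s(\overline{g}(z))'$ and applying the order-reversal of $'$ in $E_3$ turns this into $t(z)\leq (s\circ\overline{g})(z)$ for all $z\in E_2$, which is the second equivalence. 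The third is proved identically, substituting $y=w'$ in $E_1$ and using $f(w')=(\overline{f}(w))'$ and $s(w')=s(w)'$ to pass from $s(y)\leq (t\circ f)(y)$ for all $y$ to $(t\circ\overline{f})(w)\leq s(w)$ for all $w$.

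The argument is essentially routine once these ingredients are isolated, so I expect the main obstacle to be purely a matter of bookkeeping: keeping straight in which of the three algebras each prime and each order relation is taken, and in particular verifying the direction of the defining compositions for the dual pair $(\overline{g},\overline{f})$ so that $g(z')=(\overline{g}(z))'$ and $f(w')=(\overline{f}(w))'$ emerge with the correct primes. It is worth emphasizing that, beyond the adjunction used in the first step, the second and third equivalences require no further appeal to the Galois property at all — they are formal dualizations resting solely on complement-preservation of $s,t$ and on the order-reversing involution $'$.
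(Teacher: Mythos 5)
Your proposal is correct and takes essentially the same route as the paper's own proof: the first equivalence by instantiating at $x=f(y)$ and $y=g(x)$ and using the unit/counit inequalities $\id_{E_1}\leq g\circ f$, $f\circ g\leq \id_{E_2}$ with monotonicity of $s,t$, and the second and third by complement preservation of effect algebra morphisms together with the substitutions $x=z'$, $y=w'$ and the identities $g(z')=(\overline{g}(z))'$, $f(w')=(\overline{f}(w))'$. If anything, your version is slightly more explicit than the paper's, which proves only one direction of the second equivalence and leaves the rest to ``the same considerations.''
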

\begin{proof} Assume first that $(s\circ g)(x)\leq t(x)\ \text{for all}\ x\in E_2$. Let 
$y\in E_1$. Then $s(y)\leq s(g(f(y))\leq t(f(y))$. Conversely, let 
$s(y)\leq (t\circ f)(y)\ \text{for all}\ y\in E_1$. Let $x\in E_2$. 
Then $t(x)\geq t(f(g(x))\geq s(g(x))$. 

Assume moreover that $s:E_1\to E_3$ and $t:E_2\to E_3$ are morphisms of effect algebras and 
that $(s\circ g)(x)\leq t(x)\ \text{for all}\ x\in E_2$ holds. Then, 
$(s(g(x))'\geq t(x)'\ \text{for all}\ x\in E_2$, i.e., 
$(s(g(x)')\geq t(x')\ \text{for all}\ x\in E_2$. Put $z=x'$. Then 
$(s(\overline{g}(z))\geq t(z)\ \text{for all}\ z\in E_2$. The remaining parts follow by same 
considerations.
\end{proof}

Let $\mathcal{E}=(E;+,q, d, 0,1)$   be a  q-effect algebra.
Unary operators $G$ and $H$ on 
$\mathcal{E}$ are called {\em  q-tense operators}  if the mappings 
$P= {\,}'\circ{\,} H\circ{\,} '$ and $G$ form a  Galois q-connection. Then 
also the mappings 
$F={\,} '\circ{\,} G\circ{\,} '$ and $H$ form a  Galois q-connection.
In particular, $G$ and $H$ 
satisfy  the following conditions:
  \begin{itemize}
    \item[(T1)] $G(1)=H(1)=1$,
    \item[(T2)] $x\leq y$ implies $G(x)\leq G(y)$  and $H(x)\leq H(y)$,
    \item[(T3)] $x\leq GP(x)$ and  $x\leq HF(x)$, 
 \item[(T4)] \begin{tabular}{@{}l}$G(q(x))=q(G(x))$,\\ 
    $H(q(x))=q(H(x))$,
    \end{tabular}
    \item[(T5)] \begin{tabular}{@{}l}$G(d(x))=d(G(x))$,\\ 
     $H(d(x))=d(H(x))$.
    \end{tabular}  
  \end{itemize}
for all $x, y\in E$.

It is quite natural require  our  tense operators on q-effect 
algebras to preserve unary operations $q$ and $d$ (see \cite{2}). The main aim of our paper 
is to establish a representation theorem for q-tense operators.

\begin{lemma} Let $f:{E}_1\to {E}_2$  be a mapping between q-effect algebras  $\mathcal{E}_1=(E_1;+_1,q_1, d_1,$ $0_1,1_1)$ and 
$\mathcal{E}_2=(E_2;+_2,q_2, d_2,0_2,1_2)$ satisfying the condition (GQ1),  
$r\in (0,1)\cap {\mathbb D}$. Then 
$t_r(f(x))= f(t_r(x))$ for all $x\in E_1$. 
\end{lemma}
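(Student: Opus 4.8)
The plan is to exploit that $t_r$ is, by its construction in Corollary~\ref{odhad}, a term of the clone $T_{\mathbb D}$ generated by the two unary operations $q$ and $d$. Since the generators are unary, the unary term functions of $T_{\mathbb D}$ are precisely the finite composites $w_1\circ\cdots\circ w_k$ with each $w_i\in\{q,d\}$, together with the identity. Interpreting such a word in $\mathcal{E}_1$ and in $\mathcal{E}_2$ yields the two operations between which I must show that $f$ intertwines. Thus the lemma follows once I prove that the collection of unary terms $t\in T_{\mathbb D}$ satisfying $f\circ t^{\mathcal{E}_1}=t^{\mathcal{E}_2}\circ f$ is all of $T_{\mathbb D}$; I then apply this to $t=t_r$.

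First I would settle the base cases. For $t=\id$ the identity $f\circ\id=\id\circ f$ is trivial, and for the two generators the required identities $f(q_1(x))=q_2(f(x))$ and $f(d_1(x))=d_2(f(x))$ are exactly the two clauses of (GQ1). So each generator, and the identity, belongs to the asserted collection.

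The inductive step is to verify that this collection is closed under composition, which is where the content of the statement sits, although it is entirely formal. If $u$ and $v$ are unary terms with $f\circ u^{\mathcal{E}_1}=u^{\mathcal{E}_2}\circ f$ and $f\circ v^{\mathcal{E}_1}=v^{\mathcal{E}_2}\circ f$, then for $t=u\circ v$ one pushes $f$ across one factor at a time:
\[
f\circ u^{\mathcal{E}_1}\circ v^{\mathcal{E}_1}
= u^{\mathcal{E}_2}\circ f\circ v^{\mathcal{E}_1}
= u^{\mathcal{E}_2}\circ v^{\mathcal{E}_2}\circ f ,
\]
so $t=u\circ v$ again lies in the collection. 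Since every element of $T_{\mathbb D}$ is obtained from $q$, $d$ and the identity by finitely many such compositions, a straightforward induction on the number of occurrences of $q$ and $d$ shows that the collection exhausts $T_{\mathbb D}$.

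Finally, specialising to the term $t_r$ furnished by Corollary~\ref{odhad} gives $f(t_r(x))=t_r(f(x))$ for every $x\in E_1$, as required. The only point demanding care, rather than a genuine obstacle, is the bookkeeping that $t_r$ is built from $q$ and $d$ alone and involves no constants or further operations, so that (GQ1) by itself suffices and neither (GQ2) nor any preservation of $+$, $0$, $1$, or of the order by $f$ is invoked.
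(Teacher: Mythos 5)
Your proof is correct and is essentially the paper's own argument: the paper disposes of the lemma in one line by observing that $t_r$ is built inductively from $q$ and $d$ alone, and your write-up simply makes that induction explicit (base cases from (GQ1), closure under composition by pushing $f$ across one factor at a time). Nothing further is needed.
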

\begin{proof} It follows from the fact that  $t_r\in T_{\mathbb D}$ is defined inductively using only 
the operations $q$ and $d$. 
\end{proof}

\section{\mbox{$\mathrm q$}-semi-states on \mbox{$\mathrm q$}-effect algebras} \label{semistate}

The aim of this section is to establish a 
relationship among various kinds of q-semi-states on q-effect algebras and to 
show that every Jauch-Piron q-semi-state can be created as an infimum of 
q-states.

\begin{definition}\label{semist}{\rm  Let $\mathcal{E}=(E;+,q, d, 0,1)$  be a q-effect algebra and 
let $\mathcal I$ be the standard q-effect algebra. A map 
$s:{E} \to [0,1]$ is called
\begin{enumerate}
\item {\em a q-semi-state on}  $\mathcal{E}$ if 
\begin{itemize}
\item[(i)] $s(1)=1,$
\item[(ii)] $s(x)\leq s (y)$ whenever $x\leq y$,
 \item[(iii)] $s(x)\odot s(x)=s(d(x)),$ 
\item[(iv)] $s(x)\oplus s(x)=s(q(x)),$
\end{itemize}

\item {\em a Jauch-Piron q-semi-state on}  $\mathcal{E}$ if $s$ is  a q-semi-state and 
\begin{itemize}
\item[(v)] $s(x)=1= s(y)$ implies  $s(z)=1$ for some $z\in E$, $z\leq x$ and $z\leq y$;
\end{itemize}
\item {\em a strong q-semi-state on}  $\mathcal{E}$ if $s$ is  a q-semi-state and 
\begin{itemize}
\item[(vi)] $s(x)=1= s(y)$ and $x\cdot y$   defined implies  $s (x\cdot y)=1$.
\end{itemize}
\end{enumerate}
}
\end{definition}

Note that any q-state on  $\mathcal{E}$ is a q-semi-state.

Moreover, if $\mathcal{E}$ is an MV-algebra  and $s$ a state on $\mathcal{E}$ (in the effect algebraic sense, see also \cite{Mun}) 
then we always have 
$s(x\vee y)+s(x\wedge y)=s(x)+s(y)$  for all $x, y\in E$. Hence for every 
MV-algebra any its state $s$ satisfies (v).

\begin{lemma}\label{JPmeetstate} Let $\mathcal{E}=(E;+,q, d, 0,1)$  be a q-effect algebra, 
$s:{E} \to [0,1]$ a Jauch-Piron q-semi-state on  $\mathcal{E}$. Then 
$s$ is a strong q-semi-state.
\end{lemma}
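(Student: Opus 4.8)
The plan is to use axiom (Q5) of the q-effect algebra as the bridge between the Jauch-Piron condition (v) and the partial product $x\cdot y$, with the q-semi-state condition (iii) controlling the values of $s$.

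Concretely, I would start from the hypotheses of (vi): assume $s(x)=1=s(y)$ and that $x\cdot y$ is defined. Applying the Jauch-Piron property (v) to $x$ and $y$ produces an element $z\in E$ with $z\leq x$, $z\leq y$ and $s(z)=1$. Since $x\cdot y$ is defined we have $x'\leq y$, which is equivalent (as $'$ is an order-reversing involution) to $y'\leq x$. Thus $z\leq x$, $z\leq y$ and $y'\leq x$, so (Q5) applies and yields $d(z)\leq x\cdot y$.

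The remaining step is to read off the value of $s$ on $d(z)$. By (iii), $s(d(z))=s(z)\odot s(z)=1\odot 1=1$, using $1\odot 1=1$ in the standard q-effect algebra $\mathcal I$. Monotonicity (ii) applied to $d(z)\leq x\cdot y$ then gives $1=s(d(z))\leq s(x\cdot y)\leq 1$, so $s(x\cdot y)=1$, which is precisely (vi).

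I do not expect a genuine obstacle here; the one point worth flagging is that the lower bound $z$ furnished by (v) need not itself lie below $x\cdot y$, because $E$ need not be a lattice and so $x\cdot y$ need not be the infimum of $x$ and $y$. The role of (Q5) is exactly to repair this, replacing $z$ by the smaller element $d(z)$ that does sit below $x\cdot y$, while (iii) ensures that the passage from $z$ to $d(z)$ keeps the $s$-value equal to $1$.
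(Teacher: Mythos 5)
Your proof is correct and follows essentially the same route as the paper's: invoke the Jauch--Piron property (v) to get $z\leq x$, $z\leq y$ with $s(z)=1$, note that definedness of $x\cdot y$ gives $y'\leq x$, apply (Q5) to obtain $d(z)\leq x\cdot y$, and conclude via (iii) and monotonicity that $s(x\cdot y)=1$. Your closing remark correctly identifies why (Q5) is the essential ingredient (since $x\cdot y$ need not be a meet in a non-lattice q-effect algebra), which the paper leaves implicit.
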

\begin{proof} Assume that $s(x)=1= s(y)$ and $x\cdot y$ is  defined. Then there is an 
element  $z\in E$ such that $z\leq x, z\leq y, y'\leq x$ and  $s(z)=1$. Therefore also 
$d(z)\leq x\cdot y$  which yields  
$
\begin{array}{@{}r@{\,}c@{\,}l}
1&=&s(z)=s(z)\odot s(z)=s(d(z))\leq s(x\cdot y).
\end{array}
$\end{proof}

\begin{lemma}\label{meetstate} Let $\mathcal{E}=(E;+,q, d, 0,1)$  be a q-effect algebra, 
$S$ a non-empty set of q-semi-states on $\mathcal{E}$. Then
\begin{enumerate}
\item[(a)] the pointwise meet 
$t=\bigwedge S:\mathcal{E} \to [0,1]$ is a q-semi-state on  $\mathcal{E}$,
\item[(b)] if $S$ is linearly ordered then 
$q=\bigvee S:\mathcal{E} \to [0,1]$ is a q-semi-state on  $\mathcal{E}$.
\end{enumerate}
\end{lemma}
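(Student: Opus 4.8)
The plan is to verify the four defining conditions (i)--(iv) of Definition~\ref{semist} directly and pointwise in $[0,1]$, separately for the meet $t=\bigwedge S$ and for the join $q=\bigvee S$. Conditions (i) and (ii) will be immediate from the behaviour of infima and suprema, whereas (iii) and (iv) rest on a single analytic fact that I would isolate at the outset. \emph{Key fact:} on the standard q-effect algebra the diagonal maps $\phi(u)=u\odot u=\max(2u-1,0)$ and $\psi(u)=u\oplus u=\min(2u,1)$ are nondecreasing and continuous on $[0,1]$, so for every nonempty $A\subseteq[0,1]$ one has $\phi(\inf A)=\inf_{u\in A}\phi(u)$ and $\psi(\inf A)=\inf_{u\in A}\psi(u)$, and symmetrically $\phi(\sup A)=\sup_{u\in A}\phi(u)$ and $\psi(\sup A)=\sup_{u\in A}\psi(u)$. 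Monotonicity gives one inequality in each case; choosing elements of $A$ tending to $\inf A$ from above (respectively to $\sup A$ from below) and invoking continuity gives the reverse.

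For part (a), writing $t(x)=\inf_{s\in S}s(x)$, I would note first that $t(1)=\inf_{s}s(1)=1$, which is (i); and that if $x\leq y$ then $t(x)\leq s(x)\leq s(y)$ for every $s\in S$, whence $t(x)\leq\inf_{s}s(y)=t(y)$, which is (ii). For (iii) I would then use, for each $s$, the semi-state identity $s(d(x))=s(x)\odot s(x)=\phi(s(x))$ and apply the Key fact to $A=\{s(x):s\in S\}$:
\[
t(d(x))=\inf_{s}s(d(x))=\inf_{s}\phi(s(x))=\phi\Bigl(\inf_{s}s(x)\Bigr)=\phi(t(x))=t(x)\odot t(x),
\]
and (iv) follows identically with $\psi$ and $\oplus$ replacing $\phi$ and $\odot$.

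For part (b), with $S$ linearly ordered as in the statement and $q(x)=\sup_{s\in S}s(x)$, the computation is the same with $\sup$ in place of $\inf$ throughout, invoking the supremum half of the Key fact applied to $\{s(x):s\in S\}\subseteq[0,1]$: conditions (i) and (ii) go through verbatim, and (iii)--(iv) reduce again to the commutation of $\phi,\psi$ with the supremum. The only genuine content of the proof, and the step I expect to be the obstacle, is precisely this interchange of the unary operations $\odot$ and $\oplus$ with infima and suprema; once the Key fact is established, conditions (iii) and (iv) for both $t$ and $q$ are immediate and (i)--(ii) are routine.
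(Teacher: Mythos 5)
Your proof is correct and is precisely the ``straightforward checking of conditions (i)--(iv)'' that the paper's proof invokes, with the only substantive point --- that the monotone continuous maps $u\mapsto u\odot u=\max(2u-1,0)$ and $u\mapsto u\oplus u=\min(2u,1)$ commute with infima and suprema of nonempty subsets of $[0,1]$ --- correctly isolated and correctly proved. One remark: since your Key fact holds for \emph{arbitrary} nonempty subsets of $[0,1]$, your argument for part (b) never actually uses the linearity of $S$; that hypothesis is superfluous for q-semi-states (it would matter only if one also wanted to preserve additivity, i.e., for q-states).
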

\begin{proof}
The proof is a straightforward checking of conditions (i)-(iv). 
\end{proof}

\begin{definition}\label{JPrepsemist}{\rm  Let $\mathcal{E}=(E;+,q, d, 0,1)$  be a q-effect algebra.  
\begin{enumerate}
\item[(a)] 
If $S$ is an order reflecting set of q-states on  $\mathcal{E}$ then $\mathcal{E}$ is said to be {\em q-representable}.
\item[(b)] 
If $S$ is an order reflecting set of 
Jauch-Piron q-states on  $\mathcal{E}$ then $\mathcal{E}$ is said to be {\em q-Jauch-Piron representable}.
\item[(c)] If any q-state is q-Jauch-Piron then $\mathcal{E}$ is called an  {\em q-Jauch-Piron q-effect algebra}.
\end{enumerate}}
\end{definition}

First, note that any q-Jauch-Piron q-effect algebra with an order reflecting set of q-states is  q-Jauch-Piron representable. 
Also, any  q-Jauch-Piron representable q-effect algebra is  q-representable. 

Second, if $\mathcal{E}$ is {q-representable} then the induced morphism 
${i_{\mathcal{E}}^S}:{\mathcal{E}} \to [0,1]^{S}$ (sometimes called {\em an embedding}) 
is an order reflecting morphism 
of effect algebras such that ${i_{\mathcal{E}}^S}(q(x))={i_{\mathcal{E}}^S}(x)\oplus {i_{\mathcal{E}}^S}(x)$ 
and ${i_{\mathcal{E}}^S}(d(x))={i_{\mathcal{E}}^S}(x)\odot {i_{\mathcal{E}}^S}(x)$ 
for all $x\in E$. Here the operations $\oplus$ and $\odot$ are defined componentwise.

\begin{lemma}\label{porovb}  Let  $\mathcal{E}=(E;+,q, d, 0,1)$  be a q-effect algebra, $s, t$ 
q-semi-states on  $\mathcal{E}$. Then $t\leq s$ iff 
$t(x)=1$ implies $s(x)=1$ for all $x\in E$.
\end{lemma}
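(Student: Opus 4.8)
The statement is an equivalence, and one direction is immediate: if $t\leq s$ pointwise and $t(x)=1$, then $1=t(x)\leq s(x)\leq 1$, forcing $s(x)=1$. So the whole content lies in the converse, and I would prove it by contraposition together with a density argument over the dyadic rationals. Concretely, assuming that $t(x)=1$ implies $s(x)=1$ for every $x$, I want to rule out $s(x)<t(x)$ for any fixed $x\in E$.

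The key observation I would record first is that a q-semi-state $s\colon E\to[0,1]$ satisfies condition (GQ1) when viewed as a map $s\colon\mathcal{E}\to\mathcal{I}$ into the standard q-effect algebra: indeed, axioms (iii) and (iv) say precisely $s(d(x))=s(x)\odot s(x)=d_{\mathcal I}(s(x))$ and $s(q(x))=s(x)\oplus s(x)=q_{\mathcal I}(s(x))$. Hence the last Lemma of Section \ref{Preliminaries} applies and gives, for every $r\in(0,1)\cap\mathbb D$, the intertwining identities $s(t_r(x))=t_r(s(x))$ and $t(t_r(x))=t_r(t(x))$, where $t_r\in T_{\mathbb D}$ is the term furnished by Corollary \ref{odhad}. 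Recall that for a real argument $y\in[0,1]$ one has $t_r(y)=1$ if and only if $r\leq y$.

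Now suppose, for contradiction, that $s(x)<t(x)$ for some $x$. Since $s(x)<t(x)\leq 1$ and the dyadic rationals are dense in $[0,1]$, I can choose $r\in(0,1)\cap\mathbb D$ with $s(x)<r\leq t(x)$ (taking $r$ strictly below $1$ when $t(x)=1$, which is possible because $s(x)<1$). From $r\leq t(x)$ and Corollary \ref{odhad} we get $t(t_r(x))=t_r(t(x))=1$, so by hypothesis $s(t_r(x))=1$; but $s(t_r(x))=t_r(s(x))$ and $r>s(x)$ force $t_r(s(x))\neq 1$, a contradiction. Therefore $t(x)\leq s(x)$ for all $x$, that is, $t\leq s$.

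The only genuinely delicate point is the first one, namely checking that q-semi-states fall under the scope of the $t_r$-intertwining Lemma; once this is in place the argument is a routine separation of the two values by a dyadic threshold, and the endpoint bookkeeping needed to keep $r$ inside $(0,1)$ is the sole place where one must be slightly careful.
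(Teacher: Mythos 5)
Your proof is correct and takes essentially the same route as the paper's: separate $s(x)$ from $t(x)$ by a dyadic $r$, apply Corollary \ref{odhad}, and use the intertwining $s(t_r(x))=t_r(s(x))$, $t(t_r(x))=t_r(t(x))$ to derive the contradiction $s(t_r(x))=1$ and $s(t_r(x))<1$. Your explicit verification that a q-semi-state satisfies (GQ1) as a map into $\mathcal I$ (so that the intertwining lemma applies) is a detail the paper leaves implicit, but the argument is otherwise identical.
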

\begin{proof} Clearly,  $t\leq s$ yields the condition
$t(x)=1$ implies $s(x)=1$ for all $x\in E$.

Assume now that $t(x)=1$ implies $s(x)=1$ for all $x\in E$ is valid and 
that there is $y\in E$ such that $s(y) < t(y)$. Thus, there is a 
dyadic number $r\in (0,1)\cap\mathbb D$ such that $s(y)<r< t(y).$ 
By Corollary \ref{odhad} there is a term 
 $t_r$ in $T_{\mathbb D}$ such that
$t_r(s(y))< 1$ and $t_r(t(y))=1$. It follows that 
$s(t_r(y))=t_r(s(y))< 1$ and $t(t_r(y))=t_r(t(y))=1$. The last condition yields that 
$s(t_r(y))=1$, a contradiction.
\end{proof}

\begin{theorem}\label{prusekapr}  Let $\mathcal{E}=(E;+,q, d, 0,1)$  be a q-effect algebra 
with an order reflecting set $S=\{ s: E\to [0,1]\mid \ s\ \text{is a}\ \text{q-state on}$  $\mathcal{E}\}$, $t$ 
a  Jauch-Piron  q-semi-state on  $\mathcal{E}$ and 
$S_{t}=\{ s\in S \mid   s\geq t\}$. 
Then $t=\bigwedge S_t$.
\end{theorem}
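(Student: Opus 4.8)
The plan is to prove the two inequalities $t\le \bigwedge S_t$ and $\bigwedge S_t\le t$ separately, the first being immediate and the second reducing, via Lemma~\ref{porovb}, to a single separation statement. Since every $s\in S_t$ satisfies $s\ge t$ by definition, the pointwise infimum dominates $t$, giving $t\le\bigwedge S_t$ at once. For the reverse inequality I would first note that $\bigwedge S_t$ is again a q-semi-state by Lemma~\ref{meetstate}(a) (once $S_t\ne\emptyset$ is known), so that Lemma~\ref{porovb} applies to the pair $\bigwedge S_t$ and $t$: it then suffices to show that $(\bigwedge S_t)(x)=1$ forces $t(x)=1$. Passing to the contrapositive, the whole theorem comes down to the \emph{separation claim}: whenever $t(x)<1$ there is a q-state $s$ with $s\ge t$ and $s(x)<1$ (such an $s$ automatically lies in $S_t$ and witnesses $(\bigwedge S_t)(x)\le s(x)<1$). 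The decisive simplification is that Lemma~\ref{porovb} lets me aim only for $s(x)<1$ rather than for the exact value $t(x)$.

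To set up the separation I would use Lemma~\ref{JPmeetstate} to replace the Jauch-Piron hypothesis by the strong one, so that $F_t=\{w\in E\mid t(w)=1\}$ is a filter of $\mathcal{E}$ (an ideal of $\mathcal{E}^{op}$) not containing $x$. Fix a dyadic $r$ with $t(x)<r<1$. Conditions (iii)--(iv) say exactly that any q-semi-state $u$ commutes on values with $q$ and $d$, hence with every term of $T_{\mathbb D}$; so, by Corollary~\ref{odhad}, the analytic bound $u(x)<r$ is equivalent to the purely algebraic condition $t_r(x)\notin F_u$. This reformulation is what ties the value of a q-semi-state at $x$ to the behaviour of its filter and is the tool I would feed into the maximality step below.

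The heart of the argument is then a Zorn's lemma construction. I would consider the poset $\mathcal{P}=\{u\mid u\text{ a q-semi-state},\ u\ge t,\ u(x)\le r\}$, ordered pointwise, which contains $t$ and in which every chain $\mathcal{C}$ has the pointwise supremum as an upper bound: by Lemma~\ref{meetstate}(b) this supremum is again a q-semi-state, it clearly dominates $t$, and its value at $x$ stays $\le r$. A maximal element $s$ of $\mathcal{P}$ then satisfies $s\ge t$ and $s(x)\le r<1$, so the proof is complete once $s$ is shown to be a \emph{q-state}, i.e.\ additive on orthogonal pairs. This additivity is the main obstacle: the semi-state axioms only give one-sided control of $s(a+b)$ against $s(a)+s(b)$, and I expect that a failure of additivity at some orthogonal pair would let one strictly enlarge $s$ within $\mathcal{P}$ (preserving $u\ge t$ and $u(x)\le r$), contradicting maximality. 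This is precisely where the strong/Jauch-Piron property of $t$, the filter $F_t$, and the term apparatus of Corollary~\ref{odhad} and Proposition~\ref{T4} must be brought to bear, and it is the step I expect to require the most care. Finally, applying the separation claim to any $x$ with $t(x)<1$ also produces a member of $S_t$, so $S_t\neq\emptyset$ and the earlier appeal to Lemma~\ref{meetstate}(a) is legitimate.
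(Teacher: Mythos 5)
Your opening reduction is sound and, in fact, mirrors the paper: $t\le\bigwedge S_t$ is immediate, and Lemma~\ref{porovb} correctly reduces the converse to the separation claim that $t(x)<1$ forces the existence of a q-state $s\ge t$ with $s(x)<1$ (the paper phrases this as a contradiction with a dyadic $r$ strictly between $t(x)$ and $\bigwedge S_t(x)$, but the content is the same, and your algebraization of ``$s(x)<r$'' as $t_r(x)\notin F_s$ via Corollary~\ref{odhad} is exactly the paper's move). The genuine gap is the heart of your argument: the claim that a maximal element of $\mathcal{P}=\{u\mid u\text{ a q-semi-state},\ u\ge t,\ u(x)\le r\}$ must be a q-state. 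You state this as an expectation and give no mechanism for it. There is none in sight: the q-semi-state axioms impose \emph{no} control whatsoever of $s(a+b)$ against $s(a)+s(b)$ beyond monotonicity (note that superadditivity, condition (ii'), is derived in the paper only as a \emph{corollary} of this very theorem), and enlarging a non-additive maximal $u$ pointwise while preserving the rigid equational constraints (iii)--(iv), the bound $u(x)\le r$, and monotonicity is not a local perturbation one can perform at a single pair $a,b$.

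A tell-tale symptom that the step cannot be filled in as outlined is that your argument never uses the hypothesis that $S$ is order reflecting, whereas this hypothesis is precisely where the paper gets additivity. The paper's proof embeds $\mathcal{E}$ (via order reflection) into the complete MV-algebra $[0,1]^S$ with RDP, forms the filter of $[0,1]^S$ generated by $U=\{z\in E\mid t(z)=1\}$, shows $t_r(x)$ avoids it using Corollary~\ref{obind} together with the Jauch-Piron/strong properties of $t$ (common lower bounds in $U$, closure under $\mu_k$), extends by Zorn to a maximal filter of $[0,1]^S$ missing $t_r(x)$, passes to the quotient by the associated prime ideal --- which is a \emph{linearly ordered} MV-algebra by Dvure\v{c}enskij's results --- and finally composes with the MV-morphism of that quotient into $[0,1]$, using Proposition~\ref{T4} to get the value bound $<r$. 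The resulting $s$ is automatically additive because it is a composite of effect-algebra (MV-) morphisms; Zorn is applied to filters of the ambient MV-algebra, not to semi-states of $\mathcal{E}$. Without importing this ambient structure (RDP, Riesz filters, prime-ideal quotients), your maximality argument has nothing to convert a semi-state into a state, so the proposal as it stands does not prove the theorem.
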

\begin{proof} We may assume that $E\subseteq [0,1]^{S}$ such that $x+y$, $x\cdot y$, $q(x)$ and $d(x)$ computed in $\mathcal{E}$ gives us 
the same results as $x+y$, $x\cdot y$, $x\oplus x$ and $x\odot x$ computed in $[0, 1]^{S}$ and restricted to elements from $E$. This means that 
the inclusion map $i:\mathcal{E}\to [0, 1]^{S}$ is an order reflecting morphism of q-effect algebras.
Note also that $[0, 1]^{S}$ is a complete lattice q-effect algebra with RDP (MV-algebra).

Clearly,  $t\leq\bigwedge S_t$. Assume that there is 
$x\in E$ such that  $t(x)<\bigwedge S_t (x)$. Thus, there is a 
dyadic number $r\in (0,1)\cap\mathbb D$ such that $t(x)<r<\bigwedge S_t (x).$ 
Again by  Corollary \ref{odhad} there is a term 
 $t_r$ in $T_{\mathbb D}$ such that 
$t(t_r(x))=t_r(t(x))<1$.

Let us put 
$U=\{z\in E\mid t(z)=1\}$.  
Clearly, $z\in U$, $w\in E$ and $w\geq z$ implies $w\in U$. 
For all $f_1, \dots, f_n\in U$ there is an element $f\in U, f\leq f_i$ ,  $i=1, \dots, n$ and  
$z\in U$ yields $\mu_k(z)\in U$ for all $k\in\mathbb{N}$ since $t$ is a Jauch-Piron q-semi-state, 
$t_r(x)\not\in U$.

Let $V$ be a filter of $[0, 1]^{S}$ generated by the set $U$. 
Then  by \cite[Proposition 3.1]{Dvu4} 
$$
\begin{array}{r@{\,} l}
V=\{y\in [0, 1]^{S} \mid &\exists n\in\mathbb{N}, \exists g_1, \dots, g_n\in [0, 1]^{S}, \\
&\exists f_1, \dots, f_n\in U, f_i\leq g_i,\\ 
&i=1, \dots, n, y=g_1\cdot{} \dots {}\cdot g_n\}.
\end{array}
$$

Let us assume that $t_r(x)\in V$. Then  $\exists n\in\mathbb{N}, \exists g_1, \dots, g_n\in [0, 1]^{S}, 
\exists f_1, \dots, f_n\in U, f_i\leq g_i, i=1, \dots, n, t_r(x)=g_1\cdot{} \dots {}\cdot g_n$. There is 
an element $f\in U$ such that $ f\leq f_i$ ,  $i=1, \dots, n$. 
Let $k\in\mathbb{N}$ be minimal such that $n\leq 2^{k}$. It follows by Corollary \ref{obind} 
that $t_r(x)=g_1\cdot{} \dots {}\cdot g_n \cdot \underbrace{1 \cdot{} \dots {}\cdot 1}_{2^{k}-n\ \text{times}} 
\geq \mu_{k}(f)\in U$, i.e., $t_r(x)\in U$, a contradiction. 

So we have that $t_r(x)\notin V$. Let $W$ be a maximal filter of $[0, 1]^{S}$  which does contain $V$ and does not 
contain $t_r(x)$. Then the set $I=\{y\in [0, 1]^{S} \mid y'\in W\}$ is a prime ideal in $[0, 1]^{S}$. It follows 
by \cite[Proposition 6.5 and Proposition 6.10]{Dvu4} that 
$[0, 1]^{S}/I$ is a linearly ordered q-effect algebra, i.e., an MV-algebra such that $t_r([x]_{I})\not=[1]_I$. 
In particular the factor map $\pi_I:[0, 1]^{S}\to [0, 1]^{S}/I$ is  an MV-morphism such that $\pi_I(x)=[x]_{I}$ 
and $\pi_I(F)\subseteq \pi_I(W)=\{[1]_I\}$.

Let us denote by $U_I$ the maximal 
ideal of $ [0, 1]^{S}/I$ and by $\overline{s}: [0, 1]^{S}/I \to [0,1]$ the 
corresponding MV-morphism.
Hence by Proposition \ref{T4} $\overline{s}([x]_{I})<r<1$. Let us put $s=\overline{s}\circ \pi_I\circ i$. Then 
$s$ is a q-state such that 
$s(F)=\overline{s}(\pi_I(F))=\overline{s}(\{[1]_I\})=\{1\}$. It follows by Lemma \ref{porovb} 
that  $t\leq s$, i.e., $s\in S_t$ and  $s(x)=\overline{s}([x]_{I})< r <\bigwedge S_t (x) \leq  s(x)$, 
a contradiction.
\end{proof}

\begin{corollary}\label{nenul} Let $\mathcal{E}=(E;+,q, d, 0,1)$  be a q-effect algebra 
with an order reflecting set $S$ of q-states on  $\mathcal{E}$.  
The only  Jauch-Piron  q-semi-state $t$ on  $\mathcal{E}$
with $t(0)\not=0$ is the constant function $t(x)=1$ for all $x\in E$.
\end{corollary}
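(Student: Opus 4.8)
The plan is to derive this immediately from Theorem \ref{prusekapr}, which does essentially all the work; the corollary is a short degenerate-case reading of it.

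First I would dispose of the existence half of the statement by checking that the constant function $t\equiv 1$ really is a Jauch-Piron q-semi-state with $t(0)=1\neq 0$. Conditions (i) and (ii) are trivial, (iii) and (iv) hold because $1\odot 1=(0\oplus 0)'=1$ and $1\oplus 1=1$ in $[0,1]$, and (v) is satisfied by taking $z=0\leq x$, $z=0\leq y$, for which $t(0)=1$. This shows the asserted function is a legitimate example, so only its uniqueness remains in question.

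For the substantive half, let $t$ be any Jauch-Piron q-semi-state with $t(0)\neq 0$, hence $t(0)>0$. Applying Theorem \ref{prusekapr} gives $t=\bigwedge S_{t}$, where $S_{t}=\{s\in S\mid s\geq t\}$. The decisive observation is that $S_{t}=\emptyset$: if some q-state $s$ satisfied $s\geq t$, then evaluating at $0$ would force $s(0)\geq t(0)>0$, contradicting the defining property $s(0)=0$ of a q-state. Thus no q-state can dominate $t$, and $S_{t}$ is empty.

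It then only remains to read off the value of the empty meet: pointwise, $\bigl(\bigwedge S_{t}\bigr)(x)=\inf\{s(x)\mid s\in S_{t}\}=\inf\emptyset=1$ for every $x\in E$, since the infimum of the empty subset of $[0,1]$ is the top element $1$. Hence $t(x)=1$ for all $x\in E$. I do not expect a genuine obstacle here; the only point demanding a little care is this final empty-meet step, where one must invoke the convention $\inf\emptyset=1$ to identify $\bigwedge S_{t}$ with the constant function. All real difficulty has already been absorbed into Theorem \ref{prusekapr}, whose construction of a separating dominating q-state is exactly what guarantees that $t=\bigwedge S_{t}$ even in this degenerate situation.
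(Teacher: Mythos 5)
Your proof is correct and follows essentially the same route as the paper: invoke Theorem \ref{prusekapr} to write $t=\bigwedge S_t$, observe that $S_t$ must be empty since every q-state sends $0$ to $0$ while $t(0)>0$, and conclude via the empty-meet convention $\bigwedge\emptyset=1$. Your additional verification that the constant function $1$ is indeed a Jauch-Piron q-semi-state is a harmless (and correct) extra check that the paper omits.
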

\begin{proof}
Clearly, the set $S_{t}=\{ s\in S \mid   s\geq t\}$ from Theorem \ref{prusekapr} has to 
be empty (otherwise we would have $t(0)=\bigwedge S_t(0)= \bigwedge \{ 0\}=0$, 
a contradiction) which yields that $t(x)=(\bigwedge S_t)(x)= \bigwedge \emptyset=1$.
\end{proof}

\begin{corollary}\label{nenul} Let $\mathcal{E}=(E;+,q, d, 0,1)$  be a q-effect algebra 
with an order reflecting set $S$ of q-states on  $\mathcal{E}$.  
Then all   Jauch-Piron  q-semi-states $t$ on  $\mathcal{E}$
with $t(0)=0$ satisfy the condition  
\begin{itemize}
\item[(ii')]$t(x)+ t(y)\leq t (x+ y)$ whenever $x+ y$ is defined.
\end{itemize}
\end{corollary}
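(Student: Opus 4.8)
The plan is to reduce everything to Theorem~\ref{prusekapr}, which represents a Jauch-Piron q-semi-state as the meet of the q-states that dominate it. Setting $S_{t}=\{ s\in S \mid s\geq t\}$ as in that theorem, I would begin from the identity $t=\bigwedge S_{t}$, so that $t(z)=\bigwedge_{s\in S_{t}} s(z)$ (an infimum computed in $[0,1]$) for every $z\in E$.

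First I would observe that the hypothesis $t(0)=0$ guarantees $S_{t}\neq\emptyset$: were $S_{t}$ empty, Theorem~\ref{prusekapr} would give $t=\bigwedge\emptyset=1$, whence $t(0)=1$, contrary to assumption. Thus the defining infimum runs over a non-empty family of genuine q-states, each of which is a morphism of q-effect algebras.

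Now fix $x,y\in E$ with $x+y$ defined. The decisive fact is that each $s\in S_{t}$, being a q-state, satisfies $s(x+y)=s(x)+s(y)$; here ordinary addition in $[0,1]$ is legitimate because the left-hand side already lies in $[0,1]$. Since $s\geq t$ gives $s(x)\geq t(x)$ and $s(y)\geq t(y)$, we obtain $s(x+y)=s(x)+s(y)\geq t(x)+t(y)$ for every $s\in S_{t}$. Taking the infimum over $s\in S_{t}$ then yields $t(x+y)=\bigwedge_{s\in S_{t}} s(x+y)\geq t(x)+t(y)$, which is precisely condition (ii').

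I do not anticipate any serious obstacle: the argument is just the representation $t=\bigwedge S_{t}$ from Theorem~\ref{prusekapr}, combined with the additivity of q-states and the elementary inequality that the infimum of a sum dominates the sum of the infima. The only point requiring a moment's care is the non-emptiness of $S_{t}$, and that is exactly what the restriction $t(0)=0$ secures; without it $t$ could be the constant function $1$, for which (ii') is vacuous anyway.
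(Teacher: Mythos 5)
Your proof is correct and takes essentially the same route as the paper: both reduce to Theorem~\ref{prusekapr} to write $t=\bigwedge S_t$, use $t(0)=0$ to ensure $S_t\neq\emptyset$, and then combine the additivity $s(x+y)=s(x)+s(y)$ of each q-state $s\geq t$ with the infimum to obtain (ii'). The paper merely compresses the final computation into a citation of \cite[Lemma 6]{chajapas}; that is exactly the step you carry out explicitly.
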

\begin{proof}
Clearly, the set $S_{t}=\{ s\in S \mid   s\geq t\}$ from Theorem \ref{prusekapr} is 
non-empty, its elements satisfy (ii') and they map $0$ to $0$. 
As in \cite[Lemma 6]{chajapas} we get (ii').
\end{proof}

\begin{corollary}\label{dusprusekapr}  Let $\mathcal{E}=(E;+,q, d, 0,1)$  be a q-Jauch-Piron  
q-effect algebra with an order reflecting set 
$S$ of all  Jauch-Piron q-states on  $\mathcal{E}$, $t$ 
a  Jauch-Piron  q-semi-state on  $\mathcal{E}$ and 
$S_{t}=\{ s\in S \mid   s\geq t\}$. 
Then $t=\bigwedge S_t$.
\end{corollary}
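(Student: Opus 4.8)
The plan is to reduce Corollary \ref{dusprusekapr} directly to Theorem \ref{prusekapr} rather than to reprove anything from scratch. The key observation is that the two statements have almost identical shape: Theorem \ref{prusekapr} concludes $t = \bigwedge S_t$ where $S$ is an order reflecting set of \emph{all} q-states, whereas here $S$ is an order reflecting set of \emph{all Jauch-Piron} q-states on a \emph{q-Jauch-Piron} q-effect algebra. So first I would invoke the remark following Definition \ref{JPrepsemist}: on a q-Jauch-Piron q-effect algebra every q-state is automatically Jauch-Piron. Hence the set of all Jauch-Piron q-states coincides with the set of all q-states, and an order reflecting set of Jauch-Piron q-states is simply an order reflecting set of q-states in the sense required by Theorem \ref{prusekapr}.

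Once that identification is made, the hypotheses of Theorem \ref{prusekapr} are satisfied verbatim: $\mathcal{E}$ is a q-effect algebra, $S$ is an order reflecting set of q-states, and $t$ is a Jauch-Piron q-semi-state. I would then simply apply Theorem \ref{prusekapr} to obtain $t = \bigwedge S_t$, where $S_t = \{ s \in S \mid s \geq t\}$ is the same set in both statements. The only point requiring a line of justification is that the $S$ appearing in the hypothesis of the corollary is literally usable as the $S$ in the theorem, which is exactly what the q-Jauch-Piron assumption guarantees.

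The main (and essentially only) obstacle is the bookkeeping around \emph{which} set of states $S$ denotes: in Theorem \ref{prusekapr} the set $S$ is defined intrinsically as the set of all q-states, whereas the corollary hands us a possibly smaller order reflecting set of Jauch-Piron q-states. I would therefore want to check carefully that the proof of Theorem \ref{prusekapr} actually goes through with the given $S$ in place of the full state space — but inspecting that proof, the only properties of $S$ it uses are that $S$ is an order reflecting set of q-states (to embed $\mathcal{E}$ into $[0,1]^S$ as an order reflecting morphism of q-effect algebras) and that the states constructed via the quotient $[0,1]^S/I$ land back in $S$. Under the q-Jauch-Piron hypothesis the latter is automatic, since any q-state produced is Jauch-Piron and hence already a member of $S$. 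Thus the corollary follows immediately, and I would phrase the proof as: \emph{Since $\mathcal{E}$ is q-Jauch-Piron, every q-state on $\mathcal{E}$ is a Jauch-Piron q-state, so $S$ is an order reflecting set of q-states. The claim is now a direct consequence of Theorem \ref{prusekapr}.}
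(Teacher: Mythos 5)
Your proposal is correct and follows exactly the paper's own argument: the paper likewise reduces the corollary to Theorem \ref{prusekapr} by noting that the q-Jauch-Piron hypothesis makes the set of all Jauch-Piron q-states coincide with the set of all q-states, so the theorem applies verbatim. Your additional check that the states constructed inside the proof of Theorem \ref{prusekapr} land back in $S$ is harmless but unnecessary once that identification of the two state sets is made.
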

\begin{proof}It follows immediately from Proposition \ref{prusekapr} since 
$$\begin{array}{r@{\,}c@{\,}l}S&=&\{ s: E\to [0,1]\mid \ s\ \text{is a}\ \text{q-state on}\ \mathcal{E}\}\\[0.2cm]
&=&\{ s: E\to [0,1]\mid \ s\ \text{is a}\ \text{Jauch-Piron q-state on}\ \mathcal{E}\}.
\end{array}$$
\end{proof}

\section{The construction of \mbox{$\mathrm q$}-tense operators}\label{construction}

In this section we present a canonical construction of Galois 
q-connections and q-tense operators.

By a {\em frame}  it is meant a triple $(S,T,R)$ where $S,T$ are non-void sets 
and $R\subseteq S\times T$. 
If $S=T$, we will write briefly $(T, R)$ for the frame $(T,T,R)$ and we say that $(T, R)$ is a {\em time frame}.  Having 
a q-effect algebra $(E;+,q, d, 0,1)$ and a non-void set $T$, we can produce the direct power
$(E^T;+, q, d, o,j)$ where the operation $+, q$, $d$ and the induced operation  $'$ 
are defined and evaluated on $p,q\in E^T$ componentwise. Moreover, $o, j$ 
are such elements of $E^T$ that $o(t)=0$ and $j(t)=1$
for all $t\in T$. The direct power ${E}^T$ is again a q-effect algebra.

The notion of a time frame allows us to construct  q-tense operators 
on q-effect algebras. 


We have the following   corollary of \cite[Theorem 2]{boturtense}.

\begin{theorem}\label{nvc}
 Let $\mathcal{M}$ be a linearly ordered complete q-effect algebra, 
 $(S, T, R)$ be a frame 
 and $G^{*}$ be a map from 
 ${M}^T$ into ${M}^S$ defined by 
$$G^{*}(p)(s) = \bigwedge\{p(t) \mid t \in T, sRt\},$$ 

\noindent{}for all $p\in M^T$  and $s\in S$.  Then 
$G^{*}$ 
has a left adjoint $P^{*}$ such that 
$(P^{*},G^{*})$ is a Galois q-connection. In this case, 
for all $q\in M^S$  and $t\in T$,  
$$P^{*}(q)(t) = \bigvee\{q(s) \mid s \in T, sRt\}.$$ 
\end{theorem}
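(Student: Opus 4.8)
The plan is to treat everything coordinatewise and to reduce the statement to properties of the single complete chain $M$. First I would fix the setting: since $\mathcal M$ is complete, the direct powers $M^{T}$ and $M^{S}$ are again complete q-effect algebras in which the partial operation $+$, the unary operations $q$ and $d$, the induced operation $'$, and the order $\le$ are all computed coordinatewise, and in which arbitrary meets and joins are likewise computed coordinatewise. Moreover, being a linearly ordered complete q-effect algebra, $\mathcal M$ is a complete MV-chain, so all the suprema and infima occurring in the definitions of $G^{*}$ and $P^{*}$ exist in $M$ (with the empty meet equal to $1$ and the empty join equal to $0$); hence $G^{*}$ and the candidate $P^{*}$ are well defined maps into $M^{S}$ and $M^{T}$ respectively.

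Next I would exhibit the Galois connection directly, rather than invoking only the completeness half of Lemma \ref{GalCon}. Taking $P^{*}$ to be given by the displayed formula, I would verify the adjunction $P^{*}(y)\le p$ iff $y\le G^{*}(p)$ for all $y\in M^{S}$ and $p\in M^{T}$. Unfolding the left-hand inequality coordinatewise yields the condition ``$y(s)\le p(t)$ whenever $sRt$'', and unfolding the right-hand inequality yields exactly the same condition; hence the two are equivalent and $(P^{*},G^{*})$ is a monotone Galois connection with $P^{*}$ the left adjoint. Equivalently one could first check that $G^{*}$ preserves all meets by interchanging iterated infima in the complete lattice $M$ and then appeal to Lemma \ref{GalCon}, but the direct computation simultaneously confirms the asserted formula for $P^{*}$.

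It then remains to check that $(P^{*},G^{*})$ is a Galois \emph{q}-connection, i.e. conditions (GQ1) and (GQ2). Because $q$ and $d$ act coordinatewise, each identity reduces, at a fixed coordinate, to a commutation statement in $M$: (GQ2) for $G^{*}$ amounts to $q(\bigwedge A)=\bigwedge q(A)$ and $d(\bigwedge A)=\bigwedge d(A)$ for $A=\{p(t)\mid sRt\}$, while (GQ1) for $P^{*}$ amounts to $q(\bigvee B)=\bigvee q(B)$ and $d(\bigvee B)=\bigvee d(B)$ for $B=\{y(s)\mid sRt\}$. Thus the whole matter reduces to showing that in the complete MV-chain $M$ the operations $q(x)=x\oplus x$ and $d(x)=x\odot x$ preserve arbitrary meets and arbitrary joins.

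The main obstacle is precisely this last point, and it is where both linearity and completeness of $\mathcal M$ are essential: over a non-chain or non-complete q-effect algebra these operations need not commute with infinite meets and joins. For the join case one uses that in a complete MV-algebra $\oplus$ distributes over arbitrary joins in each argument, and then passes to the diagonal $x\mapsto x\oplus x$ using the total order, since every $x_i\oplus x_j$ is dominated by $x_k\oplus x_k$ with $x_k=\max(x_i,x_j)$, giving $\bigvee_i(x_i\oplus x_i)=\bigvee_{i,j}(x_i\oplus x_j)=(\bigvee_i x_i)\oplus(\bigvee_i x_i)$; the meet case for $d$ is order-dual. The two remaining commutations ($q$ with meets, $d$ with joins) follow from the order-continuity of these term operations on the complete chain $M$. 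Since $\mathcal M$ is an MV-algebra, this is exactly the content of \cite[Theorem 2]{boturtense}, which one may alternatively cite verbatim. Reassembling the coordinatewise identities then yields (GQ1) and (GQ2), completing the proof.
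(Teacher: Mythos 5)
Your proposal is correct, but it takes a genuinely different route from the paper. The paper's entire proof is a one-line reduction: since any linearly ordered complete q-effect algebra is an MV-algebra (a fact recorded in the Preliminaries), it cites \cite[Theorem 2]{boturtense} for everything --- both that $(P^{*},G^{*})$ is a Galois connection and that $G^{*}$ and $P^{*}$ preserve $q$ and $d$. You instead unfold the argument: your coordinatewise check that $P^{*}(y)\leq p$ and $y\leq G^{*}(p)$ both reduce to ``$y(s)\leq p(t)$ whenever $sRt$'' is a complete and clean proof of the adjunction, and it simultaneously certifies the displayed formula for $P^{*}$ by uniqueness of adjoints; likewise your reduction of (GQ1)--(GQ2) to the commutation of $q$ and $d$ with arbitrary meets and joins in the chain $M$ (including the empty cases via $q(1)=d(1)=1$ and $q(0)=d(0)=0$) is exactly right. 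What your route buys is transparency about where linearity and completeness enter (the diagonal argument $\bigvee_i(x_i\oplus x_i)=\bigvee_{i,j}(x_i\oplus x_j)$ uses the total order); what the paper's route buys is that all delicate analysis is outsourced. The one caveat: the delicate part of your version is asserted rather than proved. Residuation in an MV-algebra gives for free only that $\odot$ distributes over arbitrary joins and $\oplus$ over arbitrary meets; your claims that $\oplus$ distributes over arbitrary \emph{joins} in $M$, and that the remaining commutations ($q$ with meets, $d$ with joins) follow from ``order-continuity,'' are precisely the nontrivial content of \cite[Theorem 2]{boturtense} --- true for complete MV-chains, but requiring an argument (e.g.\ via the classification of complete MV-chains as finite chains or $[0,1]$, or a direct computation with $\ominus$). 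Since you explicitly offer the verbatim citation of that theorem as a fallback, the proof stands; without it, those two distributivity assertions would be a gap rather than a remark.
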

\begin{proof} %
Since any linearly ordered complete q-effect algebra is  an MV-algebra we know 
by \cite[Theorem 2]{boturtense} that $(P^{*},G^{*})$ is a Galois connection and that both $G^{*}$ and $P^{*}$ 
preserve the unary operations $q$ and $d$.
\end{proof}

We say that $(P^{*},G^{*})$ is the 
{\em canonical Galois q-connection  induced  by the frame} $(S,T,R)$  
{\em and the q-effect algebra $\mathcal{M}$}.

\begin{corollary}\label{nvccor}
 Let $\mathcal{M}$ be a linearly ordered complete q-effect algebra, 
 $(S, R)$ be a time frame, $G^{*}$ and $H^{*}$ be  maps from 
 ${M}^S$ into ${M}^S$ defined by 
$$\begin{array}{l c l}
G^{*}(p)(s) &=& \bigwedge\{p(t) \mid t \in S, sRt\},\\
H^{*}(p)(s) &=& \bigwedge\{p(t) \mid t \in S, tRs\}
\end{array}
$$ 

\noindent{}for all $p\in M^S$  and $s\in S$.  Then 
$G^*$ ($H^*$) is a q-tense operator on  $\mathcal{M}^S$  which 
has a left adjoint $P^{*}$ ($F^{*}$). In this case, 
for all $q\in M^S$  and $t\in S$,  
$$\begin{array}{l c l}
P^{*}(q)(t)& =& \bigvee\{q(s) \mid s \in S, sRt\},\\
F^{*}(q)(t)& =& \bigvee\{q(s) \mid s \in S, tRs\}.
\end{array}$$ 
Moreover, 
\begin{itemize}
 \item[(i)] if $R$ is reflexive then $G^*(p)\leq p$, $H^*(p)\leq p$, 
$q\leq P^*(q)$ and $q\leq F^*(q)$ hold for all $p, q\in M^S,$
 \item[(ii)] if $R $ is symmetric then $G^*=H^*$ and $P^*=F^*$ hold,
\item[(iii)] if $R$ is transitive then $G^*G^*(p)\geq G^*(p)$, $H^*H^*(p)\geq H^*(p)$, 
$P^*P^*(q)\leq P^*(q)$ and  $F^*F^*(q)\leq P^*(q)$ hold 
for all $p, q\in M^S$.
 \end{itemize} 
\end{corollary}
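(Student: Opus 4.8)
The plan is to derive both Galois q-connections straight from Theorem~\ref{nvc} and then to read off the tense structure and the frame-property consequences by elementary comparisons of index sets.

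First, applying Theorem~\ref{nvc} to the frame $(S,S,R)$ (the case $T=S$) shows that $G^{*}$ has a left adjoint $P^{*}$, that $(P^{*},G^{*})$ is a Galois q-connection, and that $P^{*}$ is given by the displayed formula. For $H^{*}$ I would pass to the converse relation $R^{-1}$, where $s\,R^{-1}\,t$ iff $t\,R\,s$. Then $H^{*}(p)(s)=\bigwedge\{p(t)\mid s\,R^{-1}\,t\}$ is exactly the operator produced by $G^{*}$ for the frame $(S,S,R^{-1})$, so Theorem~\ref{nvc} again furnishes a left adjoint; unwinding the formula gives $F^{*}(q)(t)=\bigvee\{q(s)\mid t\,R\,s\}$, and $(F^{*},H^{*})$ is a Galois q-connection. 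All the infima and suprema involved exist because $M$ is complete, whence the direct power $M^{S}$ is a complete q-effect algebra under the componentwise operations.

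Next, to see that $G^{*}$ and $H^{*}$ are q-tense operators, I would verify the defining identity $P^{*}={\,}'\circ H^{*}\circ{\,}'$. Since $M$ is a complete linearly ordered MV-algebra, the supplement ${\,}'$ is an order-reversing involution and therefore interchanges componentwise meets and joins; hence $\bigl(H^{*}(q')(t)\bigr)'=\bigl(\bigwedge\{q(s)'\mid s\,R\,t\}\bigr)'=\bigvee\{q(s)\mid s\,R\,t\}=P^{*}(q)(t)$, so that ${\,}'\circ H^{*}\circ{\,}'=P^{*}$. Thus the fact that $(P^{*},G^{*})$ is a Galois q-connection is precisely the statement that $G^{*},H^{*}$ are q-tense operators; the symmetric computation gives ${\,}'\circ G^{*}\circ{\,}'=F^{*}$, and the preservation of $q$ and $d$ already recorded in Theorem~\ref{nvc} supplies the remaining clauses (T4) and (T5).

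Finally, the three frame conditions follow by comparing the index sets of the defining meets and joins. If $R$ is reflexive, then $s\,R\,s$ puts $p(s)$ among the terms of the meet defining $G^{*}(p)(s)$, so $G^{*}(p)(s)\leq p(s)$, and dually $q(t)$ occurs in the join defining $P^{*}(q)(t)$, so $q(t)\leq P^{*}(q)(t)$; the same applies to $H^{*}$ and $F^{*}$. If $R$ is symmetric, the index sets $\{t\mid s\,R\,t\}$ and $\{t\mid t\,R\,s\}$ coincide, whence $G^{*}=H^{*}$ and $P^{*}=F^{*}$. If $R$ is transitive, expanding $G^{*}G^{*}(p)(s)$ produces a meet over $\{u\mid \exists t,\ s\,R\,t\text{ and }t\,R\,u\}$, a subset of $\{u\mid s\,R\,u\}$; a meet over a smaller set is larger, giving $G^{*}G^{*}(p)\geq G^{*}(p)$, and the dual computation for the join-based $P^{*}$ reverses the inequality to $P^{*}P^{*}(q)\leq P^{*}(q)$, with the analogous contraction statements for $H^{*}$ and $F^{*}$. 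I expect the only delicate point to be the interchange of ${\,}'$ with arbitrary meets and joins used in the second step; this rests on completeness of $M$ together with ${\,}'$ being an order-reversing involution, and once it is granted everything else is purely formal set-theoretic bookkeeping.
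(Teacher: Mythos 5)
Your proposal is correct and follows essentially the same route as the paper: both derive the two Galois q-connections from Theorem~\ref{nvc} (yours makes explicit the passage to the converse relation $R^{-1}$ for $H^{*}$, which the paper leaves implicit), both verify $P^{*}={\,}'\circ H^{*}\circ{\,}'$ by the de Morgan interchange of the antitone involution with componentwise meets and joins, and both handle (i)--(iii) by the same index-set comparisons. No gaps.
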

\begin{proof} %
It follows immediately from Theorem \ref{nvc} that 
$(P^{*},G^{*})$ and $(F^{*},H^{*})$  are Galois q-connections. Similarly 
as in \cite[Theorem 10]{dyn} we have 
$$\bigl( H^{*}(p')'\bigr) (s)=%
    \bigl( \mbox{$\bigwedge$}\{p'(t); t R  s\}\bigr) '=%
    \mbox{$\bigvee$}\{p(t); t R s\}=P^{*}(s).$$
Hence $G^*$ and $H^*$ are q-tense operators.

i) If the relation $R$ is reflexive, then $sR s$ yields $
G^*(p)(s)=\bigwedge\{p(t) \mid t \in S, sRt\}\leq p(s)$ for any $s\in S$ and 
$tR t$ yields $
P^*(q)(t)=\bigvee\{q(s) \mid s \in S, sRt\}\geq q(t)$ for any $t\in S$.
The part for $H^*$ and  $F^*$ we can prove analogously.

ii) If $R$ is symmetric then $G^*(p)(s)=\bigwedge\{p(t) \mid t \in S, sRt\}= %
\bigwedge\{p(t) \mid t \in S, tRs\}=H^*(p)(s)$ 
for any $s\in S$ and any $p\in M^{S}$ which clearly yields $G^*=H^*$. Similarly,  $P^*=F^*$.

iii) If $R $ is transitive, $s\in S$ and  $p\in M^{S}$ 
then $\{p(t)\, |\, sR u\mbox{ and } uR t\}\subseteq\{p(t)\, |\, sR t\}$ for any $u\in S$
and then
\begin{eqnarray*}
G^*G^*(p)(s)&=&\bigwedge\{G^*(p)(u) \mid t \in S, sRu\} =%
\bigwedge\left\{\bigwedge\{p(t) \mid t \in S, uRt\} \mid u \in S, sRu\right\} \\
&=&\bigwedge\{p(t)\, |\, sRu\mbox{ and }  uR t\}
\geq \bigwedge\{p(t) \mid t \in S, sRt\}= G^*(p)(s)
\end{eqnarray*}
holds. Similarly we can prove the remaining inequalities.
\end{proof}

We say that $(\mathcal{M}^{S},G^{*},H^{*})$ is the 
{\em tense  q-effect algebra  induced  by the time frame} $(S,R)$  
{\em and the  q-effect algebra $\mathcal{M}$}.

\section{The representation of \mbox{$\mathrm q$}-tense operators}\label{repres}

The aim of this section is to show that a q-representable q-Jauch-Piron    
q-effect algebra with q-tense operators $G$ and $H$  can 
be represented in a power of the standard q-effect algebra $\mathcal I$, 
where the set of all Jauch-Piron q-states serves as 
a time frame  with a relation defined by means of the pointwise ordering of 
these states on $x$ as well as on $G(x)$ or $H(x)$. It properly means that for 
every q-representable  q-Jauch-Piron   q-effect algebra  with 
q-tense operators there exists a suitable time frame  that  
can be constructed by use of the previously introduced concepts. 

Now we are able to establish our first main result. 

\begin{theorem}\label{mainth}
Let $\mathcal{E}_1=(E_1;+_1,q_1, d_1, 0_1,1_1)$ and 
$\mathcal{E}_2=(E_2;+_2,q_2, d_2,0_2,1_2)$  be  q-effect algebras, 
$f:E_1\to E_2$ and $g:E_2\to E_1$ be  mappings such that 
$(f,g)$ is a Galois q-connection. 
Let $ \mathcal{E}_2$ be a q-representable  q-effect algebra and let $ \mathcal{E}_1$ be a
 q-Jauch-Piron  representable  q-effect algebra, 
$T$ a set of all  q-states on $\mathcal{E}_2$ 
 and $S$ a set of all Jauch-Piron q-states on $\mathcal{E}_1$. 

Further, let $(S, T, R_g)$ be a frame such that 
the relation $R_g\subseteq S\times T$ is defined by 
$$sR_g t\mbox{ if and only if } s(g(x))\leq t(x)\mbox{ for any } x\in E_2.$$

Then $g$ is representable via the  canonical Galois connection $(P^*,G^*)$ 
between complete q-effect algebras $\mathcal I^{S}$  and $\mathcal I^{T}$ induced  by the frame $(S,T,R_g)$  
and the standard q-effect algebra $\mathcal I$, 
$G^*:[0,1]^T \to [0,1]^S$, i.e., the following diagram of mappings commutes:

$$
\begin{diagram}
{E}_2&%
\rTo(2,0)^{\quad\quad{}g}&&{E}_1&\qquad&\qquad&\\
\dTo(0,3)^{i_{{E}_2}^T}&&&\dTo(0,3)_{{i_{{E}_1}^S}}&\qquad&\qquad&\\
[0,1]^T&\rTo(2,0)_{\quad\quad\quad{}G^*}&&[0,1]^S&.\qquad&\qquad&
\end{diagram}
$$
\end{theorem}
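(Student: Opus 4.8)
The plan is to verify the asserted commutativity $i_{E_1}^S \circ g = G^* \circ i_{E_2}^T$ pointwise. Since $i_{E_1}^S(w)(s) = s(w)$ and $i_{E_2}^T(x)(t) = t(x)$ for $w \in E_1$, $x \in E_2$, $s \in S$, $t \in T$, and since $G^*$ is given by Theorem \ref{nvc}, the identity to be proved reduces to
$$ s(g(x)) = G^*(i_{E_2}^T(x))(s) = \bigwedge\{\, t(x) \mid t \in T,\ sR_g t\,\} $$
for every $x \in E_2$ and every $s \in S$. First I would record the key bookkeeping observation that, by the very definition of $R_g$, the condition $sR_g t$ is equivalent to $t \geq s\circ g$ in the pointwise order on maps $E_2 \to [0,1]$; hence the index set $\{t \in T \mid sR_g t\}$ coincides exactly with the set $S_{s\circ g} = \{t \in T \mid t \geq s\circ g\}$ occurring in Theorem \ref{prusekapr}.

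The crux is then the claim that $s\circ g : E_2 \to [0,1]$ is a Jauch-Piron q-semi-state on $\mathcal{E}_2$. That it is a q-semi-state I would check directly against Definition \ref{semist}: condition (i) holds because $g(1_2) = 1_1$ (from $1_1 \leq g(f(1_1)) \leq g(1_2)$ using the unit of the adjunction and monotonicity of $g$), so $(s\circ g)(1_2) = s(1_1) = 1$; condition (ii) is immediate from monotonicity of $g$ and of $s$; and conditions (iii), (iv) follow by combining (GQ2), which gives $g(d_2(x)) = d_1(g(x))$ and $g(q_2(x)) = q_1(g(x))$, with the q-state identities $s(d_1(w)) = s(w)\odot s(w)$ and $s(q_1(w)) = s(w)\oplus s(w)$.

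The Jauch-Piron property of $s\circ g$ is where the Galois connection does the real work, and this is the step I expect to be the main obstacle. Suppose $(s\circ g)(x) = 1 = (s\circ g)(y)$, i.e. $s(g(x)) = s(g(y)) = 1$. Because $s$ is Jauch-Piron on $\mathcal{E}_1$, there is $w \in E_1$ with $w \leq g(x)$, $w \leq g(y)$ and $s(w) = 1$. I would then set $z = f(w) \in E_2$: the adjunction $f(w) \leq x \iff w \leq g(x)$ gives $z \leq x$, and likewise $z \leq y$, while the unit inequality $w \leq g(f(w))$ together with monotonicity of $s$ yields $(s\circ g)(z) = s(g(f(w))) \geq s(w) = 1$. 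Thus $z$ witnesses condition (v) for $s\circ g$, proving the claim.

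With the claim in hand the conclusion is immediate. Since $\mathcal{E}_2$ is q-representable, its full set $T$ of q-states is order reflecting, so Theorem \ref{prusekapr} applies to the Jauch-Piron q-semi-state $s\circ g$ and gives $s\circ g = \bigwedge S_{s\circ g} = \bigwedge\{t \in T \mid sR_g t\}$. Evaluating at $x$ yields $s(g(x)) = \bigwedge\{t(x) \mid t \in T,\ sR_g t\} = G^*(i_{E_2}^T(x))(s)$, which is exactly the required identity, and since $x$ and $s$ were arbitrary the diagram commutes. I would note for orientation that one inequality here, $s(g(x)) \leq G^*(i_{E_2}^T(x))(s)$, is trivial straight from the definition of $R_g$; the entire force of Theorem \ref{prusekapr} is spent supplying enough q-states $t$ with $sR_g t$ and $t(x)$ arbitrarily close to $s(g(x))$ to force the reverse inequality.
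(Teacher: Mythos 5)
Your proof is correct and follows essentially the same route as the paper: both hinge on showing that $s\circ g$ is a Jauch-Piron q-semi-state on $\mathcal{E}_2$ (using the witness $f(w)$ obtained via the adjunction and the unit inequality $w\leq g(f(w))$), and then invoking Theorem \ref{prusekapr} to write $s\circ g$ as the infimum of the q-states $t$ with $sR_g t$, which is exactly $G^*(i_{E_2}^T(x))(s)$. Your write-up is in fact slightly more careful than the paper's (e.g., justifying $g(1_2)=1_1$ and noting that $s\circ g$ is a semi-state on $\mathcal{E}_2$, where the paper misprints $E_1$).
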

\begin{proof} Assume that $x\in E_2$ and $s\in S$. Then 
${i_{ E_1}^S}(g(x))(s)=s(g(x))\leq t(x)$ for all $t\in T$ 
such that $(s,t)\in R_g$. It 
follows that ${i_{ E_1}^S}(g(x))\leq G^{*}({i_{ E_2}^T}(x))$. 

Note that $s\circ g$ is a  Jauch-Piron q-semi-state on $ E_1$. 
Clearly, $s\circ g(1)=s(g(1))=s(1)=1$, $s\circ g$ is order-preserving as a 
composition of two order-preserving mappings, $s\circ g$ preserves unary operations 
$q$ and $d$ since both $s$ and $g$ preserve them. 
Let us check the Jauch-Piron property. 
Namely, assume that $s(g(x))=1= s(g(y))$. Then there is an element 
$z\in E_1$ such that $s(z)=1$, $z\leq g(x)$, $z\leq g(y)$. Since $g$ is a right adjoint 
to the map $f:E_1\to E_2$ we have that $f(z)\leq x$, $f(z)\leq y$. It follows that  
$1=s(z)\leq s(g(f(z))$, i.e., $s(g(f(z))=1$.

 By Theorem 
\ref{prusekapr} we get that 
$$
\begin{array}{r c l}
s\circ g&=&\bigwedge \{ t: {E}_2\to [0,1]\mid \ t\ \text{is an}\ \text{q-state},  
t\geq s\circ g\}\\[0.1cm]
&=&\bigwedge \{ t\in T\mid \ (s, t)\in  R_g\}.
\end{array}$$
This yields that actually ${i_{ E_1}^S}(g(x))= G^{*}({i_{ E_2}^T}(x))$. 
\end{proof}

To get a representation of both $f$ and $g$ from the above theorem we need 
stronger assumptions on $ \mathcal{E}_2$.

\begin{theorem}\label{secondmainth}
Let $\mathcal{E}_1$ and 
$\mathcal{E}_2$  be  q-representable q-Jauch-Piron    q-effect algebras, 
$f:E_1\to E_2$ and $g:E_2\to E_1$ be a mappings such that 
$(f,g)$ is a Galois q-connection. 
Let $T$ be a set of all Jauch-Piron q-states on $\mathcal{E}_2$ 
and $S$ be  a set of all Jauch-Piron q-states on $\mathcal{E}_1$. 

Further, let $(S, T, R_g)$ be a frame such that 
the relation $R_g\subseteq S\times T$ is defined by 
$$sR_g t\mbox{ if and only if } s(g(x))\leq t(x)\mbox{ for any } x\in E_2.$$

Then $f$ and $g$ are representable via the  canonical Galois connection $(P^*,G^*)$ 
between complete q-effect algebras $\mathcal I^{S}$  and $\mathcal I^{T}$ 
 induced  by the frame $(S,T,R_g)$  
and the standard q-effect algebra $\mathcal I$, $P^*:[0,1]^S \to [0,1]^T$, 
$G^*:[0,1]^T \to [0,1]^S$, i.e., the following diagrams of mappings commute:

$$
\begin{array}{c c c}
\begin{diagram}
{E}_1&%
\rTo(2,0)^{\quad\quad{}f}&&{E}_2&\qquad&\qquad&\\
\dTo(0,3)^{i_{{E}_1}^S}&&&\dTo(0,3)_{{i_{{E}_2}^T}}&\qquad&\qquad&\\
[0,1]^S&\rTo(2,0)_{\quad\quad\quad{}P^*}&&[0,1]^T&\qquad&\qquad&
\end{diagram}& 
&\begin{diagram}
{E}_2&%
\rTo(2,0)^{\quad\quad{}g}&&{E}_1&\qquad&\qquad&\\
\dTo(0,3)^{i_{{E}_2}^T}&&&\dTo(0,3)_{{i_{{E}_1}^S}}&\qquad&\qquad&\\
[0,1]^T&\rTo(2,0)_{\quad\quad\quad{}G^*}&&[0,1]^S&.\qquad&\qquad&
\end{diagram}

\end{array}
$$
\end{theorem}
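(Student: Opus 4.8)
The plan is to prove the two commuting diagrams separately, observing that the right-hand diagram (for $g$) is already established in Theorem \ref{mainth}, so the only genuinely new content is the left-hand diagram asserting $i_{E_2}^T\circ f = P^*\circ i_{E_1}^S$. The key is to exploit the symmetry of the hypotheses: under Theorem \ref{secondmainth} both $\mathcal{E}_1$ and $\mathcal{E}_2$ are q-representable q-Jauch-Piron, so by Corollary \ref{dusprusekapr} the set of all Jauch-Piron q-states coincides with the set of all q-states on each algebra, and both $T$ and $S$ are order reflecting sets of such states. Thus the stronger hypothesis on $\mathcal{E}_2$ (that \emph{every} q-state is Jauch-Piron, not merely that it is q-representable) is exactly what lets us run the argument of Theorem \ref{mainth} in the reverse direction.

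First I would recall the companion Galois q-connection $(\overline{g},\overline{f})$, where $\overline{g}={}'^{{}_1}\circ g\circ{}'^{{}_2}$ and $\overline{f}={}'^{{}_2}\circ f\circ{}'^{{}_1}$. Since $f={}'^{{}_2}\circ\overline{g}\circ{}'^{{}_1}$ (equivalently $\overline{f}$ is the right adjoint of $\overline{g}$ in the dual picture), representing $f$ reduces to representing the right adjoint $\overline{g}$ of the Galois q-connection $(\overline{f},\overline{g})$, and then transporting through the two supplementation maps. Concretely, I would apply the already-proved Theorem \ref{mainth} to the Galois q-connection $(\overline{f},\overline{g})$ in place of $(f,g)$: this yields $i_{E_1}^S\circ\overline{g}=G^*_{\overline{g}}\circ i_{E_2}^T$, where $G^*_{\overline{g}}$ is the $G^*$ of the frame $(S,T,R_{\overline{g}})$. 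The work is then to identify the relation $R_{\overline{g}}$ with $R_g$ and the operator coming out of this dual application with $P^*$ for the original frame $(S,T,R_g)$.

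The identification of relations is the crux. Using that the states $s\in S$ and $t\in T$ are morphisms of effect algebras (hence commute with supplementation up to the usual $s(x')=1-s(x)$), one shows that $s(\overline{g}(x))\leq t(x)$ for all $x\in E_2$ is equivalent to $t(x)\leq s(g(x))$ for all $x$, i.e., to the ``reverse'' inequality defining the complementary relation. Lemma \ref{RgRf} is the precise tool here: it exchanges $(s\circ g)(x)\leq t(x)$ with $t(z)\leq(s\circ\overline{g})(z)$ whenever $s,t$ are morphisms of effect algebras. Combining this with the defining formulas of Corollary \ref{nvccor}, namely $P^*(q)(t)=\bigvee\{q(s)\mid sR_g t\}$ versus $G^*(p)(s)=\bigwedge\{p(t)\mid tR_{\overline{g}}s\}$, and dualizing through $'$, the lower adjoint $P^*$ for the frame $(S,T,R_g)$ is recognized as the conjugate by supplementation of the upper adjoint $G^*_{\overline{g}}$ for the dual frame.

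Thus the left diagram follows formally once these identifications are in place, giving $i_{E_2}^T\circ f=P^*\circ i_{E_1}^S$. The main obstacle I anticipate is purely bookkeeping: one must verify carefully that the relation $R_{\overline{g}}$ defined via $\overline{g}$ coincides (after the $'$-twist) with $R_g$ defined via $g$, so that the \emph{same} frame $(S,T,R_g)$ and hence the \emph{same} canonical pair $(P^*,G^*)$ governs both diagrams; a sign or direction error in matching $sR_g t$ with its dual would break the commutativity. Once Corollary \ref{dusprusekapr} has supplied the equality of the q-state set with the Jauch-Piron q-state set on both sides, and Lemma \ref{RgRf} has been invoked to flip the adjunction inequality, the remaining steps are the same infimum-of-q-states argument already carried out in Theorem \ref{mainth}, now applied to the semi-state $t\circ\overline{g}$ (equivalently $s\circ g$ read dually) on $E_2$.
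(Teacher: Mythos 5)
Your high-level plan coincides with the paper's proof: the $g$-diagram is Theorem \ref{mainth} verbatim (since $\mathcal{E}_2$ is q-Jauch-Piron, its q-states and Jauch-Piron q-states coincide, so the $T$ of Theorem \ref{mainth} is the $T$ here), and the $f$-diagram is obtained by passing to the companion Galois q-connection, applying Theorem \ref{mainth} to it, translating the frame via Lemma \ref{RgRf}, and conjugating by supplementation. However, your execution systematically swaps $\overline{f}$ and $\overline{g}$, and this is not a harmless relabeling. First, $f\neq{}'\circ\overline{g}\circ{}'$: since $\overline{g}={}'\circ g\circ{}'$, conjugating $\overline{g}$ by the supplementations returns $g$, not $f$ (and $\overline{g}\colon E_2\to E_1$, so the composite ${}'^{{}_2}\circ\overline{g}\circ{}'^{{}_1}$ you wrote is not even well-typed); the map whose representation recovers $f$ is $\overline{f}$, via $f={}'^{{}_2}\circ\overline{f}\circ{}'^{{}_1}$. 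Second, in the companion connection $(\overline{g},\overline{f})$ the map $\overline{g}$ is the \emph{lower} adjoint and $\overline{f}$ the \emph{upper} adjoint (your own parenthetical says so), and Theorem \ref{mainth} represents only the upper adjoint. It genuinely fails for $\overline{g}$: e.g.\ for the Galois q-connection $f\equiv 0_2$, $g\equiv 1_1$ one has $\overline{g}\equiv 0_1$, the relation $R_{\overline{g}}$ is all of $S\times T$, and your claimed identity $i_{E_1}^S\circ\overline{g}=G^{*}_{\overline{g}}\circ i_{E_2}^T$ already fails at $x=1_2$ (note $s\circ\overline{g}$ need not be a q-semi-state at all, as $\overline{g}(1_2)=g(0_2)'$ need not be $1_1$). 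Third, even formally, the operator produced by your dual application goes $[0,1]^T\to[0,1]^S$, whereas $P^{*}\colon[0,1]^S\to[0,1]^T$; conjugation by $'$ does not reverse domain and codomain, so $P^{*}$ cannot be ``the conjugate by supplementation of $G^{*}_{\overline{g}}$'' on type grounds.

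The repair is exactly the paper's argument, and all the tools you name are the right ones once routed through $\overline{f}$ instead of $\overline{g}$. Apply Theorem \ref{mainth} to $(\overline{g},\overline{f})$ with the two algebras' roles interchanged, i.e.\ to the frame $(T,S,R_{\overline{f}})$ where $tR_{\overline{f}}s$ iff $t(\overline{f}(w))\leq s(w)$ for all $w\in E_1$; this yields $i_{E_2}^T\circ\overline{f}=H^{*}\circ i_{E_1}^S$ with $H^{*}\colon[0,1]^S\to[0,1]^T$ (note the direction). Lemma \ref{RgRf} — specifically the first equivalence combined with the third one, which is where the hypothesis that states are effect-algebra morphisms enters — gives $R_{\overline{f}}=(R_g)^{-1}$, whence by the formulas of Theorem \ref{nvc} one computes $H^{*}(q')(t)=\bigl(\bigvee\{q(s)\mid s\in S,\ sR_g t\}\bigr)'=P^{*}(q)(t)'$. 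Then $i_{E_2}^T(f(y))=i_{E_2}^T(\overline{f}(y'))'=H^{*}(i_{E_1}^S(y'))'=P^{*}(i_{E_1}^S(y))$, which is the left diagram for the \emph{same} frame $(S,T,R_g)$ as in the $g$-diagram. As written, your reduction through $\overline{g}$ could at best re-derive the already-known $g$-diagram, so the new content of the theorem is not reached.
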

\begin{proof}  Let $(T, S, R_{\overline{f}})$ be a frame such that 
the relation $R_{\overline{f}}\subseteq T\times S$ is defined by 
$$tR_{\overline{f}} s\mbox{ if and only if } t(\overline{f}(w))\leq s(w)\mbox{ for any } w\in E_1.$$

By Lemma \ref{RgRf} we have that 
$$
(s\circ g)(x)\leq t(x)\ \text{for all}\ x\in E_2\ \text{if and only if }\ 
 (t\circ \overline{f})(w)\leq s(w)\ \text{for all}\ w\in E_1.
$$
From Theorem \ref{mainth} it follows that 
$\overline{f}$ is representable via the  canonical Galois connection $(F^*,H^*)$ 
between complete q-effect algebras $\mathcal I^{T}$  and $\mathcal I^{S}$
induced  by the frame $(T,S,R_{\overline{f}})$  
and the standard q-effect algebra ${\mathcal I}$, 
$H^*:[0,1]^T \to [0,1]^S$, i.e., the following diagram of mappings commutes:

$$
\begin{diagram}
{E}_1&%
\rTo(2,0)^{\quad\quad{}\overline{f}}&&{E}_2&\qquad&\qquad&\\
\dTo(0,3)^{i_{{E}_1}^S}&&&\dTo(0,3)_{{i_{{E}_2}^T}}&\qquad&\qquad&\\
[0,1]^S&\rTo(2,0)_{\quad\quad\quad{}H^*}&&[0,1]^T&.\qquad&\qquad&
\end{diagram}
$$
We then have $R_{\overline{f}}=(R_g)^{-1}$ and, for all $y\in E_1$, 
$${i_{{E}_2}^T}(f(y))'={i_{{E}_2}^T}(f(y)')=%
{i_{{E}_2}^T}(\overline{f}(y'))=H^*({i_{{E}_1}^S}(y')).$$ 

Note that 
$$
\begin{array}{r c l}
H^*(q')(t)&=&\bigwedge\{q'(s) \mid t \in T, tR_{\overline{f}}s\}=%
\bigwedge\{q'(s) \mid t \in T, sR_{g}t\}\\[0.2cm]
&=&\left(\bigvee\{q(s) \mid t \in T, sR_{g}t\}\right)'=%
P^{*}(q)(t)'.
\end{array}$$

It follows that ${i_{{E}_2}^T}(f(y)) =H^*({i_{{E}_1}^S}(y'))'=P^{*}({i_{{E}_1}^{S}(y)})$ for all $y\in E_1$. 
\end{proof}

The immediate corollary of Theorem \ref{secondmainth} is the representation theorem 
for q-tense operators.

\begin{theorem}\label{xmaintense}
Let  $\mathcal E$  be a  q-representable   q-Jauch-Piron q-effect algebra  
with an order reflecting set $S$ of  q-states and 
with q-tense operators 
$G$ and $H$. Then $(\mathcal E,G,H)$ can be embedded into the tense 
q-effect algebra $({\mathcal I}^S,G^*,H^*)$ induced by 
the time frame $(S,R_G)$ and the standard q-effect algebra ${\mathcal I}$,  
where $S$ is the set of all  Jauch-Piron q-states 
on $\mathcal E$  and the relation $R_G$ is defined by 
$$sR_G t\mbox{ if and only if } s(G(x))\leq t(x)\mbox{ for any } x\in E.$$
\end{theorem}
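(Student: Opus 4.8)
The plan is to deduce this as the special case of Theorem~\ref{secondmainth} obtained by applying that theorem to the Galois q-connection $(P,G)$, where $P={\,}'\circ H\circ{\,}'$, and taking $\mathcal{E}_1=\mathcal{E}_2=\mathcal{E}$. First I would check the hypotheses. Since $\mathcal{E}$ is q-Jauch-Piron, every q-state is a Jauch-Piron q-state, so the set $S$ of all Jauch-Piron q-states coincides with the set of all q-states; by assumption this set is order reflecting, whence $\mathcal{E}$ is a q-representable q-Jauch-Piron q-effect algebra and both the ``$S$'' and the ``$T$'' occurring in Theorem~\ref{secondmainth} equal this single $S$. Because $G$ and $H$ are q-tense operators, $(P,G)$ is by definition a Galois q-connection, and the relation $R_g$ attached to $g=G$ is exactly the relation $R_G$ of the present statement. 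Thus all hypotheses of Theorem~\ref{secondmainth} hold.

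Applying Theorem~\ref{secondmainth} then shows that both $P$ and $G$ are representable via the canonical Galois connection $(P^*,G^*)$ on $\mathcal{I}^{S}$ induced by the time frame $(S,R_G)$; concretely, the embedding $i_{\mathcal{E}}^{S}$ satisfies $i_{\mathcal{E}}^{S}\circ G=G^*\circ i_{\mathcal{E}}^{S}$ and $i_{\mathcal{E}}^{S}\circ P=P^*\circ i_{\mathcal{E}}^{S}$. It then remains only to recover the representation of $H$ from that of $P$. Here I would use two facts: that $i_{\mathcal{E}}^{S}$, being a morphism of effect algebras, commutes with the supplement $'$; and that $G^*,H^*$ are q-tense operators on $\mathcal{I}^{S}$ by Corollary~\ref{nvccor}, so that the left adjoint of $G^*$ satisfies $P^*={\,}'\circ H^*\circ{\,}'$, equivalently $H^*={\,}'\circ P^*\circ{\,}'$. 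Since $H={\,}'\circ P\circ{\,}'$ as well, for every $x\in E$,
\[
i_{\mathcal{E}}^{S}(H(x))=i_{\mathcal{E}}^{S}\bigl((P(x'))'\bigr)=\bigl(i_{\mathcal{E}}^{S}(P(x'))\bigr)'=\bigl(P^*(i_{\mathcal{E}}^{S}(x'))\bigr)'=\bigl(P^*(i_{\mathcal{E}}^{S}(x)')\bigr)'=H^*(i_{\mathcal{E}}^{S}(x)),
\]
so $i_{\mathcal{E}}^{S}\circ H=H^*\circ i_{\mathcal{E}}^{S}$ too.

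Finally, since $i_{\mathcal{E}}^{S}$ is an order reflecting, hence injective, morphism of q-effect algebras that intertwines $G$ with $G^*$ and $H$ with $H^*$, it is precisely the desired embedding of the tense q-effect algebra $(\mathcal{E},G,H)$ into $(\mathcal{I}^{S},G^*,H^*)$. I expect no genuine obstacle, since the substance of the argument is already carried by Theorem~\ref{secondmainth} and Corollary~\ref{nvccor}; the only point requiring real care is the bookkeeping that transports the representation of $P$ into that of $H$ through the involution $'$, together with the verification that the conventions $R_g=R_G$ and $P^*={\,}'\circ H^*\circ{\,}'$ are mutually consistent with the way q-tense operators were defined.
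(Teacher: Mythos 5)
Your proposal is correct and follows exactly the route the paper intends: the paper offers no written proof beyond declaring the theorem an ``immediate corollary'' of Theorem~\ref{secondmainth}, and your specialization $\mathcal{E}_1=\mathcal{E}_2=\mathcal{E}$, $f=P={\,}'\circ H\circ{\,}'$, $g=G$ (with $S=T$ forced by the q-Jauch-Piron hypothesis) is precisely that derivation. The one step the paper glosses over --- recovering $i_{\mathcal{E}}^{S}\circ H=H^{*}\circ i_{\mathcal{E}}^{S}$ from the representation of $P$ by conjugating with the supplement, using $H^{*}={\,}'\circ P^{*}\circ{\,}'$ from Corollary~\ref{nvccor} and the fact that effect-algebra morphisms preserve $'$ --- is handled correctly in your write-up.
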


In the remaining part we present an  unexpected application of our approach.

\begin{corollary}\label{xmaintense}
Let  $\mathcal M$  be an  MV-algebra  
with an order reflecting set $S$ of  MV-morphisms into the standard MV-algebra  $[0,1]$ and 
with q-tense operators 
$G$ and $H$. Then $(\mathcal M,G,H)$ can be embedded into the tense MV-algebra $([0,1]^S,G^*,H^*)$ induced by 
the time frame $(S,R_G)$ and the standard q-effect algebra ${\mathcal I}$, where the relation $R_G$ is defined by 
$$sR_G t\mbox{ if and only if } s(G(x))\leq t(x)\mbox{ for any } x\in E.$$

In particular,  q-tense operators on $\mathcal M$ are tense operators as defined by \cite{2}. 
\end{corollary}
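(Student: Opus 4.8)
The plan is to derive the statement as a specialization of the representation theorem for q-tense operators (the theorem preceding this corollary, itself a consequence of Theorem \ref{secondmainth}) once $\mathcal M$ has been recast as a q-effect algebra and its MV-morphisms have been identified with the relevant Jauch-Piron q-states. So the work is essentially one of verifying that the hypotheses of that theorem are met in the MV-setting.

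First I would invoke the remarks of Section \ref{Preliminaries}: since every MV-algebra is a lattice effect algebra, equipping $\mathcal M$ with $q(x)=x\oplus x$ and $d(x)=x\odot x$ turns it into a q-effect algebra, and every morphism of MV-algebras is then a morphism of q-effect algebras. Consequently each member of the given order-reflecting family $S$ of MV-morphisms $\mathcal M\to[0,1]$ is a q-state $\mathcal M\to\mathcal I$, so $\mathcal M$ is q-representable in the sense of Definition \ref{JPrepsemist}.

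Next I would check that $\mathcal M$ is q-Jauch-Piron, i.e. that every q-state is Jauch-Piron. Any q-state is in particular an effect-algebraic state, and the remark following Definition \ref{semist} records that on an MV-algebra every such state satisfies $s(x\vee y)+s(x\wedge y)=s(x)+s(y)$; hence $s(x)=1=s(y)$ forces $s(x\wedge y)=1$, and $z=x\wedge y\leq x,y$ witnesses condition (v) of Definition \ref{semist}. Thus $\mathcal M$ is a q-representable q-Jauch-Piron q-effect algebra with order-reflecting $S$, and the representation theorem for q-tense operators applies, embedding $(\mathcal M,G,H)$ into $(\mathcal I^{S},G^{*},H^{*})=([0,1]^{S},G^{*},H^{*})$ over the time frame $(S,R_G)$.

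For the closing assertion I would compare axiom lists: the canonical operators $G^{*},H^{*}$ built from $(S,R_G)$ on the complete MV-algebra $[0,1]^{S}$ by Corollary \ref{nvccor} are precisely the tense operators of \cite{2}, so transporting them back along the order-reflecting embedding shows that $G$ and $H$ meet the definition of \cite{2}. The step I expect to be the main obstacle is the identification of the family $S$ appearing in the conclusion: the representation theorem is phrased with $S$ the set of all Jauch-Piron q-states, so one must verify that on an MV-algebra these coincide with the MV-morphisms into $[0,1]$ — the easy direction being that every MV-morphism is a (Jauch-Piron) q-state, and the substantive direction being that commuting with $q$ and $d$ forces a q-state to preserve the full MV-structure, hence to be an MV-morphism.
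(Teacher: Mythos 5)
Your proposal is correct and takes essentially the same route as the paper: recast $\mathcal M$ as a q-effect algebra, identify the MV-morphisms into $[0,1]$ with the (Jauch-Piron) q-states so that the hypotheses of the preceding representation theorem hold with the same set $S$, apply that theorem, and note via Corollary \ref{nvccor} that $G^*$, $H^*$ are tense operators in the sense of \cite{2}, whence so are their restrictions $G$ and $H$. The step you flag as the main obstacle---that preserving $q$ and $d$ forces a q-state to be an MV-morphism---is likewise not proved in the paper, whose entire argument consists of the assertion that the MV-morphisms are exactly the Jauch-Piron q-states and that every q-state on $\mathcal M$ is Jauch-Piron (a fact imported from \cite{boturtense}), so your treatment is at parity with, and if anything more explicit than, the paper's own proof.
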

\begin{proof}
Note that any MV-morphisms from $\mathcal M$ into $[0,1]$  are exactly  Jauch-Piron q-states on $\mathcal M$ 
and that any q-state on $\mathcal M$ is Jauch-Piron. 
Since $G^*$ and $H^*$ are tense operators in the sense of  \cite{2}  
their restrictions to $\mathcal M$,    $G$ and $H$ are also tense. 
\end{proof}

Note also that the same reasoning may be applied to E-tense operators on lattice effect algebras (see \cite{chajapas}).

\section{Concluding remarks}
 We have established  a representation theorem for Galois q-connections and  q-tense operators on q-effect algebras.  
Surprisingly, our q-tense operators are exactly Galois connections connected via negation that preserve only two unary 
operations. In particular, our approach shows that some axioms of tense operators on semisimple MV-algebras  
in \cite{2} were redundant. 

\section*{Acknowledgements}  
This is a pre-print of an article published in Fuzzy Sets and Systems. 
The final authenticated version of the article is available online at: \newline 
https://www.sciencedirect.com/science/article/pii/S0165011415002390.



\end{document}